\documentclass[journal,twoside,web]{ieeecolor}

\usepackage{generic}
\usepackage{amsmath,amssymb,amsfonts}
\usepackage{dashbox}
\usepackage{algorithm}
\usepackage{graphicx}
\usepackage{textcomp}
\usepackage{subfigure}
\usepackage{epsfig} 
\usepackage{stfloats}
\usepackage{lcsys}
\usepackage{subcaption}
\usepackage{multirow}%
\usepackage{mathrsfs}%
\usepackage{xcolor}%
\usepackage{manyfoot}%
\usepackage{booktabs}%
\usepackage{algpseudocode}%
\usepackage{listings}
\usepackage{nicefrac}
\usepackage[normalem]{ulem}
\usepackage{multirow, lipsum}
\usepackage{enumerate}
\usepackage{physics}
\usepackage{url}
\usepackage{import}
\usepackage{placeins}
\usepackage{empheq}
\usepackage{float}
\usepackage{multicol}
\usepackage[english]{babel}
\usepackage{pifont}
\usepackage{footnote}
\usepackage{array}
\usepackage[colorlinks=true,linkcolor=blue,citecolor=blue,urlcolor=blue]{hyperref}
\usepackage{cite}
\usepackage{cleveref}
\usepackage{bm}
\usepackage{booktabs}

\newtheorem{theorem}{Theorem}
\newtheorem{definition}{Definition}

\newtheorem{corollary}[theorem]{Corollary}

\newcommand{\hide}[1]{}
\newcommand{\coo}{\ensuremath{\mathrm{CO_2} } }
\newcommand{\N}{\mathcal{N}}
\newcommand{\m}{\mathfrak{m}}
\newcommand{\NEc}{\mathcal{N}_{1}}
\newcommand{\NE}{\mathcal{N}_{\balpha}}

\newcommand{\support}{\mathcal{S}}
\newcommand{\eop}{\hfill \textcolor{black}{\rule{1.5ex}{1.5ex}}}

\newcommand{\bmu}{{\bm \mu}}
\newcommand{\balpha}{{\bm \alpha}}

\newcommand{\A}{{\mathcal A}}
\renewcommand{\t}{\theta}

\newcommand{\balp}{{\bm \alpha}}

\begin{document}
\def\BibTeX{{\rm B\kern-.05em{\sc i\kern-.025em b}\kern-.08em
    T\kern-.1667em\lower.7ex\hbox{E}\kern-.125emX}}
\markboth{\journalname, VOL. XX, NO. XX, XXXX 2017}
{Author \MakeLowercase{\textit{et al.}}: Preparation of Papers for IEEE Control Systems Letters (August 2022)}

\title{
Balancing Morality and Economics:   Population Games with Herding and Inertia}

\author{
Raghupati Vyas,
Harsitha Devaraj
and Veeraruna Kavitha
\thanks{Department of Industrial Engineering and Operations Research, IIT Bombay, Mumbai, India. 
Emails: \texttt{raghupati.vyas@iitb.ac.in, hdevaraj@purdue.edu, vkavitha@iitb.ac.in}}
}
\maketitle
\thispagestyle{empty}
\begin{abstract}
The adoption of clean technologies (CTs) plays an important role in reducing carbon dioxide ($\coo$) emissions. We study CT adoption in a large population of consumers with heterogeneous behavioral tendencies. We model the interaction among  the agents as a multi-type mean-field game in which the agents choose between clean and polluting technology based products and may either behave as rationals (trading off price and moral incentives), herding agents (just follow the majority), or lethargic agents  exhibiting inertia toward adopting the new technologies. We characterize equilibrium CT adoption levels using the  recently introduced notion of $\balpha$-Rational Nash Equilibrium ($\balpha$-RNE) and its multi-type extension. We then identify a stable subset using the limits of a stochastic turn-by-turn behavioral dynamics. Our results highlight the role of population composition in determining CT adoption. In particular, widespread adoption requires either a sufficiently small price disadvantage for CTs or the presence of a sufficiently large herding population that can be influenced through social awareness programs. Surprisingly, we could prove that environmental damages  do not provide sufficient incentives to increase CT adoption.
\end{abstract}
\begin{IEEEkeywords}
Population games, clean technology adoption, herding behavior, 
environmental economics.
\end{IEEEkeywords}

\vspace{-1mm}
\section{Introduction}
\label{sec:introduction}
The reduction of carbon dioxide ($\coo$) emissions while sustaining economic development is a major challenge faced by modern societies. Clean technology-based solutions (briefly referred to as CTs), such as electric vehicles,   renewable energy systems have been developed to reduce these emissions and mitigate environmental damage. However, despite their long-term environmental benefits, the adoption of CTs remains limited because they often involve higher upfront costs, creating a price disadvantage compared to other alternatives.

A large body of work studies how punitive policy interventions influence the transition to CTs. 
For instance,  in \cite{acemoglu2012environment,aghion2016carbon,nordhaus2008question,newell2010induced}, the authors study how carbon taxes based policies direct firms  to switch to  CTs,  using price-based  market equilibria (at which  supply equals demand).
In \cite{carmona2022meanfield}, the authors  analyze  electricity production firms using a mean-field game framework,  where the pollution is again regulated via carbon tax imposed by the regulator.
More recently, there has been a growing interest to  study coupled dynamics, using   ordinary differential equations,  that capture evolution of environmental  changes as well as   transitions towards CT adoption among large population of firms, using mean-field    models (e.g., \cite{carmona2022meanfield})  or evolutionary replicator dynamics  (e.g., \cite{tilman2020environmental,weitz2016oscillating}). 
In all these  works, the focus is  on compelling  large rational production-units, that produce significant amounts of pollution,  to utilize CTs, via punitive taxes.

In contrast, our work shifts the focus to a much larger population but of non-atomic consumers that  voluntarily decide between CTs  (like electric vehicles, solar panels, etc)  and conventional and  lower-cost alternatives. Here, the regulator or social planner   cannot impose taxes but can instead  create moral incentives  through aggressive or continual social awareness programs.   
The per-person pollution created by such a population can be much smaller (compared to firms), however the overall effect could be significant because such a population is often  much larger in size.  Furthermore,  this huge population of consumers could exhibit a variety  of behavioral patterns while making their choices, unlike the firms, which typically resort to  more rational decisions owing to the fact that the stakes are much higher (rational firms are considered in \cite{acemoglu2012environment,aghion2016carbon,nordhaus2008question,newell2010induced,carmona2022meanfield}, while myopic rationals are considered in \cite{tilman2020environmental,weitz2016oscillating}).

We consider a large population of agents (consumers) deciding  between  CTs and unclean alternatives and exhibiting  three behavioral types:    rational agents, herding agents who just follow the majority, and lethargic agents who exhibit inertia towards  newer products,   with respective proportions given by  $\balpha = (\alpha_R,\alpha_H,\alpha_L)$. The utility function governing  rational choices depends upon the economic costs, moral incentives, and the environmental damages. 
We study the resulting interactions using a recently proposed   notion of $\balpha$- Rational Nash Equilibrium ($\balpha$-RNE) and its multi-type extension (see~\cite{agarwal2024balancing,vyas2026games,vyas2026multitype}). To identify the subset of equilibria  that are likely to emerge in practice -- we complement the static analysis with a dynamic perspective by  considering the limits of  stochastic turn-by-turn behavioral dynamics as in \cite{agarwal2025two,vyas2026multitype}. 

We have several interesting theoretical observations: 
\begin{itemize}
    \item when the lethargic agents are not too high in the population, one can achieve widespread CT adoption, even with a big price disadvantage --- but this is possible only if the herding crowd constitutes a sufficiently large fraction; 
    \item the moral incentives can be used to     compel  the entire rational crowd towards  CTs, if the proportion of the opposing (herding or lethargic) population  is not too high;  
\item however, with a large proportion exhibiting inertia, moral pressure on rational agents can also break,  leading to  zero adoption of CTs; 

\item surprisingly, 
the inclusion of a negative cost term, proportional to the environmental damage, did not alter the set of stable outcomes that form the potential limits of the stochastic dynamic process of decision adjustments;

\item there is no change in stable outcomes, even after considering rational agents that are extremely sensitive to environmental damage. 
\end{itemize} 
We begin by analyzing a game involving rational and herding agents in Section \ref{sec_bal_moral}, where the set of stable equilibria is characterized.  The extended games including the lethargic agents and then including the environmental damages are respectively considered in sections \ref{sec_extreme_moral_types} and \ref{sec_Game_influenced}. 





\hide{
We focus on stochastic behavioral dynamics in large populations, where agents update their actions through turn-by-turn interactions rather than continuous-time replicator dynamics. We assume a separation of time scales, so that environmental variables such as atmospheric $\coo$ evolve much more slowly than individual decisions and can be treated as quasi-static. This allows us to characterize the long-run outcomes (attractors, i.e., stable equilibria) arising from stochastic interactions among rational and behavioral agents.

\hide{have been used to study interlaced environmental and technological transitions in large population of production units.  For example, \cite{carmona2022meanfield} analyze carbon emission regulation and technology choices in electricity production using a mean-field framework, where a large population of rational producers responds optimally to a carbon tax imposed by a regulator, and equilibrium is characterized through mean-field equilibrium conditions.

Another line of work studies the joint evolution of strategies and environmental states through coupled dynamical systems. For example, \cite{tilman2020environmental,weitz2016oscillating} develop eco-evolutionary game frameworks in which strategy frequencies evolve via replicator dynamics coupled with environmental feedback, with environment-dependent payoffs leading to dynamical behaviors such as cycles.

where a large population of rational producers responds optimally to a carbon tax imposed by a regulator, and equilibrium is characterized through mean-field equilibrium conditions.

In these works, the focus is  on compelling the  large production-units, that produce significant amounts of pollution,  to utilize CTs.

A large body of work studies how policy 
interventions influence the transition from polluting to CTs.  For instance, \cite{acemoglu2012environment,aghion2016carbon,nordhaus2008question,newell2010induced} study how policies such as carbon taxes and  incentives, 
can mitigate environmental damage and direct technological change toward CTs. These works adopt a production-side perspective, where firms take market prices as given and adjust their decisions in response to policy-induced incentives.
 Firms interact with each other indirectly through prices and 
equilibrium determined by market-clearing conditions, under which prices adjust so that total supply equals total demand.

More recently, mean-field models have been used to study environmental and technological transitions in large populations.  For example, \cite{carmona2022meanfield} analyze carbon emission regulation and technology choices in electricity production using a mean-field framework, where a large population of rational producers responds optimally to a carbon tax imposed by a regulator, and equilibrium is characterized through mean-field equilibrium conditions.
}

\hide{
{\color{red} We mainly consider consumers who can't be taxed (like producers) when they dont shift to CT-based products. Only morality based incentives can be created to help the change among them towards CT.  We also consider the outcomes (or attractors) of stochastic dynamic decisions made by a variety of agent to study this process. }}

In this paper, we consider a large population of agents (consumers) who need to decide whether to adopt CT or an unclean alternative. The population consists of three behavioral types, namely  rational agents, herding agents who just follow the majority, and lethargic agents who exhibit inertia toward adopting new technologies, with respective fractions $\alpha_R, \alpha_H$ and $\alpha_L$ in the population, say $\balpha = (\alpha_R,\alpha_H,\alpha_L)$. The utility of rationals depends upon economic costs, moral incentives arising from social pressure, and environmental damage caused by atmospheric $\coo$ concentration. 

We model the resulting interactions using a mean-field framework and characterize the equilibrium adoption levels using the notion of $\balpha$- Rational Nash Equilibrium ($\balpha$-RNE) and its multi-type extension (see~\cite{agarwal2024balancing,vyas2026games,vyas2026multitype}). To identify which equilibria  are likely to emerge in practice -- we complement this static analysis with a dynamic perspective by adopting stochastic turn-by-turn behavioral dynamics as in \cite{agarwal2025two,vyas2026multitype}. 

We show that, when the lethargic agents are not too high in the population, one can achieve widespread CT adoption, even with a big price disadvantage,  if the herding crowd constitutes a sufficient fraction and can be influenced through effective awareness campaigns.
Morality incentives can be used to effectively   compel  the rational
crowd towards  CT, 
even if  the rest reject, when the latter proportion is not too high.  However, with a large proportion exhibiting inertia, moral pressure on rational agents can also break,  leading to  zero CT adoption.}

\hide{
The main contributions of this paper are threefold. First, we characterize the set of $\alpha$-RNEs in a technology adoption game with moral incentives and herding behavior and analyze their dependence on prices and population composition. Second, by integrating a dynamical model of atmospheric $\coo$, we study the impact of long-run environmental feedback on strategic behavior. Third, we extend the analysis to populations with heterogeneous sensitivities to environmental damages and show that heterogeneity alone is insufficient to shift equilibrium outcomes. Together, these results emphasize the central role of action-dependent incentives, such as pricing policies and moral rewards, in 
promoting the large-scale adoption of clean technologies.}

\section{A game: Balancing Morality and Economics}\label{sec_bal_moral}
We consider a large population, in which  $\alpha_R$ fraction consists of rational players, while the remaining $\alpha_H$ fraction exhibit herding behavior, i.e., they choose the action adopted by the majority, as characterized in \cite{agarwal2024balancing,vyas2026multitype,agarwal2025two,vyas2026games}; let $\balpha = (\alpha_R, \alpha_H)$; in later sections, we also consider agents that exhibit inertia towards new technology.
We consider a  game  among such a population, where the individuals choose between the products made with  clean and unclean technologies, and where the choices are guided by environmental hazards, morality perception and rational or behavioral  considerations.

Each player has to either use clean technology  (\textit{briefly referred to as CT and indicated by  action $a=1$}), or unclean technology (action $a=2$).  When $z$   fraction of the population   adopts CT, 
an agent with herding nature  (those who just follow majority) chooses CT only when $z \ge \nicefrac{1}{2}$. While  
we model the utility perceived by a typical rational agent, that drives the decisions of the agent,  
as below:
\begin{equation}\label{eqn_utility_players1}
    u(a, z) = 
    \begin{cases} 
        -P_c + (1 - z) z \, \m, & \text{if } a = 1, \\[2pt]
        -P_{uc} - (1 - z) z \, \m, & \text{if } a = 2, \mbox{ where, }
    \end{cases}
\end{equation}

$\bullet$  
  $P_c$ and $P_{uc}$ represent the prices associated with clean and unclean technologies, respectively --- the cleaner technologies  typically cost more  (e.g., electric vehicles have higher prices, living without plastic bags is highly inconvenient, etc), and these additional costs are the reason for public does not adopting them readily --- accordingly, we assume  $P_c > P_{uc}$.

$\bullet$  the term $z(1-z) \, \m$ captures the societal pressure on an individual related to morality --- the pressure is smaller either if too many are already following CT (not much pressure on the remaining few to follow) or if too few are following it (not many are following so the morality concerns are broken) --- the pressure towards morality is probably the  maximum when the society is highly divided in opinion, indicated by fraction $z$ near~$\nicefrac{1}{2}.$ 

$\bullet$ the coefficient $\m>0$ represents the trade-off between  the additional costs for CT and the morality concerns --- the higher the $\m$, the more moral the crowd is  ---   the social planner can  attempt to raise the morality coefficient in the public via meticulous advertisements or awareness-campaigns  and the result of such an effort can be captured by a bigger~$\m$.

The  strategic/behavioral  interactions that balance the morality concerns with costly, but environment friendly technologies can thus be  modeled as a mean-field game with rational utility function \eqref{eqn_utility_players1} --- observe the individual payoff depends only on the aggregate adoption level $z$;  we refer to this as \textit{Morality-guided Clean Technology adoption or  MgCT game}.

To characterize the  equilibrium behavior or outcome 
of   MgCT game \eqref{eqn_utility_players1},
in the presence of both rational and herding agents, we use the notion of $\balpha$-Rational Nash Equilibrium  ($\balpha$-RNE) recently proposed in \cite{agarwal2024balancing,vyas2026games}. For completeness, we restate the definition in our own notations and specialized to the games with two actions as below.

Towards this, we begin with some definitions. Recall  the entire herding crowd chooses the same action, that of the majority.  Thus given $z$, the fraction among the overall population that adopts CT, the fraction $y$ among the rationals that adopts CT  satisfies $z = \alpha_R y + \alpha_H \mathbf{1}_{\{z \ge \nicefrac{1}{2}\}} $. Hence, given $z$,  

\vspace{-3mm}
{\small\begin{equation}
 \hspace{-2mm}   y(z) :=  \frac{z - \alpha_H \mathbf{1}_{\{z \ge \nicefrac{1}{2}\}}}{\alpha_R}  = \frac{z}{\alpha_R} \mathbf{1}_{\{z < \nicefrac{1}{2}\}} 
    +  \frac{\alpha_R-(1 - z)}{\alpha_R}  \mathbf{1}_{\{z \geq \nicefrac{1}{2}\}},
\label{eqn_y_fun}
\end{equation}}%
represents the fraction among the rational sub-population that adopts CT. The support $\support(y)$ for  any  number   $y \in [0,1]$ (representative of a probability measure of a binary choice random variable),   denotes  the set of  actions chosen with strictly positive probability:

\vspace{-2mm}
{\small\begin{equation}\label{eqn_supp_def}
    \support(y) :=  \left \{ 
\begin{array}{lll}
       \{1,2\},  & \mbox{ if }  y \in (0,1),\\
       \{y+1\},  & \text{ else, i.e., if } y \in \{0, 1\}.
    \end{array}
    \right.
\end{equation}}
 We now reproduce
 \cite[Definition 1]{agarwal2024balancing} 
 that defines the solution, specially  for two-action games with herding population. 
%
%

\begin{definition}\label{defn_alphaRNE}
\textit{For any game with binary choices, i.e., with  ${\cal A} = \{1,2\}$, the fraction $z^*$ is called an \textbf{$\balpha$-Rational Nash Equilibrium} (\textbf{$\balpha$-RNE}), if it satisfies the following:
  \begin{eqnarray}\label{eqn_best_res_rat_play}
       \support(y(z^*)) \subseteq  {\rm Arg} \max_{i \in {\mathcal A}} u(i, z^*) \mbox{, with 
   } y(z^*)  \mbox{ as in \eqref{eqn_y_fun}}.
  \end{eqnarray}}
 \end{definition}
\medskip
Basically in our context, at any equilibrium $z^*$,  the rational players choose an action from the best response   to the aggregate CT adoption level~$z^*$, while  the herding players adopt the majority action  ---  support of the rational actions $y(z^*)$  is in \rm{Arg} $\max$ in  \eqref{eqn_best_res_rat_play}, while    the indicators in \eqref{eqn_y_fun} represent the herding choice (with tie-breaking in favor of CT action).

\vspace{-3mm}
\subsection*{Equilibria of the MgCT game}
We now determine the equilibrium  CT adoption levels  for various fractions $\alpha_R$ of the rational players, by identifying the $\balpha$-RNEs of MgCT game \eqref{eqn_utility_players1}. Towards that,   first define the following rational utility difference function using \eqref{eqn_utility_players1}:

\vspace{-3mm}
{\small \begin{eqnarray}\label{eqn_h_fun}
    h(z) :=  u(1, z) - u(2, z) = 2 (1-z) z \, \m-\Delta_P, \ 
\end{eqnarray}}
where $\Delta_P := P_c-P_{uc}$ represents the price disadvantage of choosing CT.
 The zeros of the function $h(\cdot)$  are given by

\vspace{-3mm}
{\small\begin{equation}\label{eqn_roots_h}
    \hspace{-2mm}R^-=\frac{1}{2} \left(1 - \sqrt{1 - \frac{2\Delta_P}{\m}}\right), \ 
    R^+  =\frac{1}{2} \left(1 + \sqrt{1 - \frac{2\Delta_P}{\m}} \right),
\end{equation}}%
and correspond to the interior points in $[0,1]$ at which the rational agents are indifferent between the two technologies (observe here $R^-+R^+ = 1$).  
As one may anticipate, the  roots $ R^-, R^+$  play an important role in identifying the  $\balpha$-RNEs. 

We begin with the classical case in which all the players are rational or when  $\alpha_R=1$. Using \cite[Theorem 1]{agarwal2024balancing}, 
\hide{
In this case, it is easy to verify that $z = 0$ is an $\balpha$-RNE, whereas $z = 1$ is not. To identify the remaining candidate equilibria, we characterize the zeros of the function $h(\cdot)$, which are given by

\vspace{-3mm}
{\small\begin{eqnarray}
   \hspace{-3mm} R^-\hspace{-3mm}&:=&\hspace{-3mm} \frac{1 - \sqrt{1 - 2\Delta_P/\m}}{2}, 
    R^+  := \frac{1 + \sqrt{1 - 2\Delta_P/\m}}{2}.
\end{eqnarray}}
These quantities correspond to the interior points at which rational agents are indifferent between the two technologies. }
the set of classical NEs is given by:

\vspace{-3mm}
{\small\begin{eqnarray}\label{eqn_set_rne_alpha_1}
    \N_1 = \begin{cases}
\left\{0, R^+,  R^-\right\},  & \text{ if } \Delta_P < \nicefrac{\m}{2},  \\
\{0,\nicefrac{1}{2}\}, & \text{ if } \Delta_P = \nicefrac{\m}{2}, \\
\{0\}, & \text{ if } \Delta_P > \nicefrac{\m}{2}. 
\end{cases}
\end{eqnarray}}

We next turn to the interesting case with herding   population. When the rationals constitute more than half the population  ($\alpha_R > \nicefrac{1}{2}$),   from \cite[Theorem 2]{agarwal2024balancing}, there is no change in the set of $\balpha$-RNEs,  that is  $\N_{\balpha}=\N_1$ --- basically the presence of smaller fraction of herding players does not alter the equilibrium set. 

However, with larger herding crowd (when $\alpha_R \leq \nicefrac{1}{2}$)  \textit{the set of $\balpha$-RNEs depends on the relative values of $\alpha_R$ and $\Delta_P$, and is characterized as follows} (again using \cite[Theorem 2]{agarwal2024balancing}):

\vspace{-3mm}
{\small\begin{eqnarray}\label{eqn_alpha_RNE}
\N_{\balpha} =
\begin{cases}
\left\{0,\alpha_H \right\}, 
& \hspace{-1mm}\text{if }  \Delta_P < \nicefrac{\m}{2}, \ \alpha_R < R^- , \\
\left\{0,R^-,R^+,\alpha_R \right\}, \hspace{-1mm}
&\hspace{-1mm} \text{if } \Delta_P < \nicefrac{\m}{2}, \ R^- \leq \alpha_R < \nicefrac{1}{2}, \\
\left\{ 0,\alpha_H\right\}, 
& \hspace{-1mm}\text{if }  \Delta_P \geq \nicefrac{\m}{2}.
\end{cases}
\end{eqnarray}} 
Although the preceding analysis identifies the set of $\balpha$-RNEs, it does not indicate which of these equilibria are likely to emerge in practice, given that multiple equilibria coexist.  In particular, some equilibria may be unstable and therefore are unlikely to be observed under strategic and behavioral adjustments.  We  thus complement the static equilibrium analysis with a dynamic perspective, where we identify the stable equilibria   that arise after a long period of such 
adjustments. 
%

\vspace{-2mm}
\subsection*{Stable equilibria: dynamic perspective}\label{sub_sec_turn_by_turn}
In practice, CT adoption takes place gradually over time. Individuals make adoption decisions at different time points, depending on when they become aware of new technologies, when they are able to afford them, and when they need to replace existing equipment/product. Moreover, once a person adopts a CT --- for example, by purchasing an electric vehicle or installing solar panels --- the decision typically remains in place for a long time period and is costly or inconvenient to reverse. Even among the individuals that decide against CT, only a small fraction might reconsider a change in opinion.   In summary, individuals do not frequently revise their choices with alternative options in these kinds of scenarios.

These features naturally suggest that adoption evolves through a sequence of irreversible individual decisions, influenced by the prevailing behavior in the population. To capture this process and study the stability of equilibrium adoption levels, further in the presence of herding crowd,  we consider a behavioral game dynamic in which players make decisions sequentially and only once, based on the current empirical distribution. This adjustment process is referred to as turn-by-turn dynamics in \cite{agarwal2025two,vyas2026multitype}.


We now describe the dynamics formally. Towards that, let $z_k$ denote the fraction of agents who have adopted the CT after $k$ updates. At each step $k+1$, a randomly selected agent observes the current CT adoption level $z_k$ and chooses an action $a_{k+1}$ according to its behavioral type (rational or herding).

If the agent is  rational\footnote{
The agents that consider only  current levels for decision-making, without any importance to future, are referred to as myopic rational   in   \cite{sandholm2010population,vyas2026multitype,agarwal2025two}.}, happens with probability $\alpha_R$, it chooses a best response to the current CT adoption level  $z_k$,  
\begin{equation}\label{eqn_rational_dec}
\mathbb{P}(a_{k+1}=1| \text{rational})=\mathbf 1_{\{h(z_k)\geq 0\}}, \text{ ($h$ as in   \eqref{eqn_h_fun})},
\end{equation}
where  the ties are broken in favor of  CT or action $1$.
Otherwise  the agent exhibits herding behavior and adopts the currently popular action  (with tie-breaking in favor of CT),
\begin{equation}\label{eqn_herding_dec}
\mathbb{P}(a_{k+1}=1| \text{herding})=\mathbf 1_{\{z_k\ge \nicefrac{1}{2}\}}.
\end{equation}
Accordingly, the aggregate adoption level evolves as
\begin{equation}\label{eqn_iterates_z_K}
z_{k+1}=z_k+\frac{1}{k+1}\big(\mathbf 1_{\{a_{k+1}=1\}}-z_k\big).
\end{equation}
The above  is an example of the two-choice turn-by-turn dynamics  analyzed in \cite{agarwal2025two}. 
Towards deriving its asymptotic analysis,  we begin with some notations and definitions that parallel those in \cite{agarwal2025two}. Again using \eqref{eqn_h_fun}, define
\begin{eqnarray}
\label{Eqn_Mz}
\hspace{-1mm}M(z) \hspace{-3mm}&:=&\hspace{-3mm} \mathbb{E}[\mathbf 1_{\{a_{k+1}=1\}}-z_k \mid z_k=z] \\
\hspace{-1mm}\hspace{-3mm}&=&\hspace{-3mm} \alpha_R \mathbf 1_{\{h(z) \geq 0\}} + \alpha_H \mathbf 1_{ \{z \ge \nicefrac{1}{2}\}} -z, \nonumber
\end{eqnarray}
to denote the mean drift of the process at population state $z$. Now we reproduce \cite[Definition 2]{agarwal2025two} that defines `attractors'.

\begin{definition}\label{defn_filippov}
\textit{A point $z_S^*\in[0,1]$ is called an \textbf{$\balpha$-rational attractor}, if there exists $\epsilon>0$ such that
$M(z)>0$ for all $z \in (z_S^*-\epsilon,z_S^*)$ and $M(z)<0$ for all $z \in (z_S^*,z_S^*+\epsilon)$.}
\end{definition}
\medskip
Let \textit{$\NE^S$ represent the set of  the $\balpha$-rational attractors}. 
We next prove 
  the convergence of  dynamics \eqref{eqn_iterates_z_K}:   
\begin{theorem}\label{thm:conv_Af}
\textit{Consider the behavioral dynamics  \eqref{eqn_rational_dec}-\eqref{eqn_iterates_z_K}. Then i) $z_k \to \mathcal{N}_\balpha^S \ \text{as } k\to\infty,$ almost surely; and ii) $\NE^S \subseteq \NE$, where $\N_\balpha$ is the class of $\balpha$-RNEs. }
\end{theorem}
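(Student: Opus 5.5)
The plan is to view \eqref{eqn_iterates_z_K} as a stochastic approximation scheme with step sizes $1/(k+1)$ and a discontinuous mean field $M(\cdot)$ as in \eqref{Eqn_Mz}, and to invoke the convergence result for two-choice turn-by-turn dynamics in \cite{agarwal2025two}. We write $z_{k+1}=z_k+\frac{1}{k+1}\big(M(z_k)+\xi_{k+1}\big)$, where $\xi_{k+1}:=\mathbf 1_{\{a_{k+1}=1\}}-\mathbb{E}[\mathbf 1_{\{a_{k+1}=1\}}\mid z_k]$ is a bounded martingale difference sequence. Since $\sum_k (k+1)^{-2}<\infty$, the noise is asymptotically negligible, and the iterates stay in $[0,1]$ because each step is a convex combination. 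It then suffices to verify the structural hypotheses of the theorem in \cite{agarwal2025two}. These are: $M$ is piecewise continuous with finitely many discontinuities; the set where $M$ changes sign is finite; and the conclusion is that $z_k$ converges almost surely to the set of attractors in the sense of Definition \ref{defn_filippov}, i.e.\ the stable zeros of the Filippov inclusion $\dot z\in \overline{\mathrm{co}}\,M(z)$.

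\textbf{Part i).} We check these hypotheses explicitly. The function $h$ is a concave quadratic, so $\{h\ge 0\}$ is either empty, the single point $\{1/2\}$, or the interval $[R^-,R^+]$. Hence $M(z)=\alpha_R\mathbf 1_{[R^-,R^+]}(z)+\alpha_H\mathbf 1_{[1/2,1]}(z)-z$ is piecewise affine with slope $-1$ and has jumps only at $R^-,1/2,R^+$. It follows that its sign changes finitely often: on each piece, $M$ is strictly decreasing and vanishes at most once. We also note $M(0)\ge 0$ and $M(1)\le 0$, so the boundary behaves correctly. The main obstacle is ruling out convergence to non-attracting sign-change points, such as upward jumps of $M$ through zero (for example at $R^-$ or $1/2$) or boundary zeros. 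For these we would rely on the argument in \cite{agarwal2025two}: near such a point the drift pushes away on at least one side, and the martingale noise almost surely eventually moves the iterate to that side. The remaining work is to confirm that every limit point of the ODE/Filippov flow lies in a zero neighbourhood where $M$ goes from positive to negative, which is exactly Definition \ref{defn_filippov}.

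\textbf{Part ii).} Take $z^*\in\NE^S$. The one-sided limits satisfy $M(z^{*-})\ge 0\ge M(z^{*+})$. Since $M+z$ takes values in $\{0,\alpha_H,\alpha_R,1\}$ and $M$ is right-continuous except possibly at $R^+$ (where $h\ge0$ gives closed intervals), we split into cases.

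\emph{Continuity point.} If $M$ is continuous at $z^*$, then $M(z^*)=0$. This gives $z^*=\alpha_R\mathbf 1_{\{h(z^*)\ge0\}}+\alpha_H\mathbf 1_{\{z^*\ge1/2\}}$, so $y(z^*)=\mathbf 1_{\{h(z^*)\ge 0\}}\in\{0,1\}$. The support of $y(z^*)$ is then the single action that rationals best respond with, and \eqref{eqn_best_res_rat_play} holds.

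\emph{Jump point.} If $z^*$ is a jump point, the sign change must come from a downward jump. The jump at $1/2$ is upward, so it cannot host an attractor; hence $z^*$ is a jump of $\mathbf 1_{\{h\ge0\}}$ where $h$ crosses from $\ge0$ to $<0$, i.e.\ $z^*=R^+$. There $h(z^*)=0$, so both actions are best responses and any $y(z^*)\in[0,1]$ is admissible; we only need $y(z^*)\in[0,1]$. This follows from the bracketing $\alpha_H\mathbf 1_{\{z^*\ge1/2\}}\le z^*\le \alpha_R+\alpha_H\mathbf 1_{\{z^*\ge 1/2\}}$, which is implied by the sign conditions on both sides.

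Together the two cases give $z^*\in\NE$, which can also be cross-checked against \eqref{eqn_set_rne_alpha_1}--\eqref{eqn_alpha_RNE}.
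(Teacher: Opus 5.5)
\textbf{Part i)} matches the paper. The paper also only checks that $h$ is continuous with finitely many zeros, so that \cite[assumption (A)]{agarwal2025two} holds, and then invokes \cite[Theorem~1]{agarwal2025two}. Like you, it does not redo the argument that excludes convergence to non-attracting sign changes.

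\textbf{Part ii)} takes a different route. The paper simply cites \cite[Theorem~3]{agarwal2025two}, a general statement that every $\balpha$-rational attractor of the two-choice turn-by-turn dynamics is an $\balpha$-RNE. You instead verify $\NE^S\subseteq\NE$ by hand, using the explicit piecewise-affine structure $M(z)+z\in\{0,\alpha_H,\alpha_R,1\}$:
\begin{itemize}
\item At a continuity point the sign conditions force $M(z^*)=0$, so $y(z^*)=\mathbf 1_{\{h(z^*)\ge 0\}}\in\{0,1\}$ is a pure best response. At the boundary point $0$ you need $M(0)\ge 0$, and at $1$ you need $M(1)\le 0$; you already noted both.
\item At a discontinuity the net jump $M(z^{*+})-M(z^{*-})$ must be non-positive. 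This rules out $R^-$ and $\nicefrac12$ (including the knife-edge $\Delta_P=\nicefrac{\m}{2}$, where $R^\pm=\nicefrac12$ and the net jump is still $+\alpha_H$). Only $R^+$ remains.
\item At $R^+$, $h=0$ makes both actions optimal. The one-sided sign conditions give exactly $\alpha_H\le R^+\le\alpha_R+\alpha_H$, i.e.\ $y(R^+)\in[0,1]$.
\end{itemize}
This argument is correct.

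The citation is shorter and holds for any $h$ satisfying assumption (A). Your argument is self-contained and shows concretely why $R^+$ is the only stable equilibrium at which rationals mix. It also treats all price regimes uniformly, including $\Delta_P\ge\nicefrac{\m}{2}$, where the paper's remark that $h$ ``has two zeros'' does not literally apply. Your part i), however, still inherits whatever hypotheses the cited convergence theorem needs in those regimes.
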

{\bf Proof:}
Firstly,   the function $h(\cdot)$  given by  \eqref{eqn_h_fun} is continuous on $[0,1]$,  has two zeros (see \eqref{eqn_roots_h}),  and satisfies  \cite[assumption (A)]{agarwal2025two}.
Thus by  \cite[Theorems~1]{agarwal2025two}   the iterates converge almost surely to the  set  of $\balpha$-rational attractors  $\NE^S$, establishing part (i). Part (ii) follows by  \cite[Theorems~3]{agarwal2025two}. \eop
\hide{
the set  $\NE^S$
 is a subset of the class ${\cal N}_\alpha$ of the $\alpha$-RNEs given in \eqref{eqn_set_rne_alpha_1}–\eqref{eqn_alpha_RNE}, see also Definition \ref{defn_alphaRNE}. \eop}
 
\medskip
Thus, with probability one, the adoption level $z_k$ converges to the set $\NE^S \subseteq \NE$   --- we hence 
 \textit{refer   $\NE^S$  as   the set of stable equilibria}. 
Further using Definition \ref{defn_filippov} and the set of  $\balpha$-RNEs $\NE$   provided in  \eqref{eqn_set_rne_alpha_1}-\eqref{eqn_alpha_RNE}, we    explicitly characterize this stable set  in Table~\ref{table_N_alpha_S_herding_only} (basically, these are the $\balpha$-RNEs    that satisfy negative-left and positive-right sign criterion  for $M(\cdot)$).

\begin{table}[h]
    \centering
\vspace{.1mm}
\begin{tabular}{|c|c|}
\hline
\centering\textbf{Regime} & \centering\textbf{Stable set of equilibria or $\balpha$-RNEs}\arraybackslash
\\
\hline 
  $\alpha_R  \ge \nicefrac{1}{2}$   &   $
 \NE^S =     
\begin{cases}
\{0, R^+\},& \text{if }   \Delta_P<\nicefrac{\m}{2},\\
\{0\}, \hspace{-1mm}& \text{if } \ \Delta_P\ge \nicefrac{\m}{2}.
\end{cases} 
$\\\hline 
 $\alpha_R  < \nicefrac{1}{2}$      & 
  $  \NE^S  =
\begin{cases}
\{0, \alpha_H\}, \hspace{-1mm}& \text{if } \Delta_P<\nicefrac{\m}{2},\  R^+ < \alpha_H ,\\
\{0, R^+,\alpha_R\}, \hspace{-1mm}& \text{if } \Delta_P<\nicefrac{\m}{2},\ \alpha_H \leq R^+,\\
\{0,  \alpha_H\}, \hspace{-1mm}& \text{if } \Delta_P\ge \nicefrac{\m}{2}.\end{cases}$
\\ \hline 
\end{tabular}
\caption{Stable equilibria with herding. \label{table_N_alpha_S_herding_only}}
\vspace{-3mm}
\end{table}

\hide{
\begin{eqnarray} \label{eqn_stable_set_alpha_geq_half}
 \NE^S  \hspace{-2mm}&=& \hspace{-2mm}   
\begin{cases}
\{0, R^+\}, \hspace{-1mm}& \text{if }   \Delta_P<\nicefrac{\m}{2},\\
\{0\}, \hspace{-1mm}& \text{if } \ \Delta_P\ge \nicefrac{\m}{2},  
\end{cases}   \\ \nonumber  && \   \ \mbox{when  } \alpha \ge   \nicefrac{1}{2}, \mbox{ else }  \\\label{eqn_stable_set_alpha_less_half}
     \NE^S  \hspace{-2mm}&=&\hspace{-2mm}
\begin{cases}
\{0, 1-\alpha\}, \hspace{-1mm}& \text{if } \Delta_P<\nicefrac{\m}{2},\ \alpha \leq R^-,\\
\{0, R^+,\alpha\}, \hspace{-1mm}& \text{if } \Delta_P<\nicefrac{\m}{2},\ R^-<\alpha,\\
\{0, 1-\alpha\}, \hspace{-1mm}& \text{if } \Delta_P\ge \nicefrac{\m}{2}.
\end{cases}  
\end{eqnarray}}
\medskip

\begin{table*}[!b]
\centering
\footnotesize
\renewcommand{\arraystretch}{1.08}
\setlength{\tabcolsep}{3pt}
\vspace{-5mm}
\begin{minipage}{0.55\textwidth}

\begin{tabular}{|c|c|}
\hline
\centering\textbf{Regime} & \centering\textbf{Stable equilibrium}\arraybackslash
\\
\hline
 $\Delta_P \geq \nicefrac{\m}{2}$
&
$
z_S^* \in
\left\{
\begin{array}{ll}
\{0,\alpha_H\}, & \text{if } \nicefrac{1}{2} < \alpha_H,\\
\dbox{\{0\}}, & \text{otherwise}.
\end{array}
\right.
$
\\
\hline

 $\Delta_P < \nicefrac{\m}{2}$, 
 $z_S^* < \nicefrac{1}{2}$
&
$
z_S^* \in
\left\{
\begin{array}{ll}
\{0, \boxed{\alpha_R}\}, & \text{if } \alpha_L+\alpha_H<R^+,\\
\dbox{\{0\}}, & \text{otherwise}.
\end{array}
\right.
$
\\
\hline
  $\Delta_P < \nicefrac{\m}{2}$,
  $z_S^* > \nicefrac{1}{2}$
&
$
z_S^* =
\left\{
\begin{array}{ll}
\alpha_H, & \text{if } R^+ < \alpha_H,\\
R^+, & \text{if } \alpha_H \leq R^+ \leq \alpha_R+\alpha_H,\\
\boxed{\alpha_R+\alpha_H}, &
\text{if } \alpha_R+\alpha_H < R^+.
\end{array}
\right.
$
\\
\hline
\end{tabular}
\caption{Stable equilibria with herding and 
lethargic agents}
\label{table_N_alpha_S_herding_Lethargic}
\vspace{-2mm}
\end{minipage}
\hspace{-4mm}
\begin{minipage}{0.42\textwidth} 
\vspace{2mm}
\begin{tabular}{|c|c|}
\hline
\centering\textbf{Regime} & \centering\textbf{Stable equilibrium}\arraybackslash
\\
\hline
$\Delta_P \geq \nicefrac{\m}{2}$ & 
$
z_S^* \in
\left\{
\begin{array}{ll}
\{0,\alpha_H\}, & \text{if } \nicefrac{1}{2} < \alpha_H,\\
\{0\}, & \text{otherwise}.
\end{array}
\right.
$\\
\hline 
 $\Delta_P<\nicefrac{\m}{2}$,  $ z_S^* <  \nicefrac{1}{2}$   &   $
 z_S^* \in      
\begin{cases}
\{0, \boxed{\alpha_R} \}, & \text{if }     \nicefrac{1}{2} \leq \alpha_H \leq R^+,
\hspace{-1mm} \\ 
\{0\},
& \text{otherwise. } \    
\end{cases} 
$\\\hline 
$\Delta_P<\nicefrac{\m}{2}$, $z_S^*  > \nicefrac{1}{2}$      & 
  $   z_S^*  =
\begin{cases}
  \alpha_H,  \hspace{-1mm}& \text{if }  \   R^+ < \alpha_H,\\
  R^+ , \hspace{-1mm}& \text{if }   \alpha_H \leq R^+  .\end{cases}$
\\ \hline 
\end{tabular}
\caption{Stable equilibria with
  herding agents}
  \label{table_N_alpha_S_herding_L}
\end{minipage}
\end{table*}

\hide{
In particular, at $\alpha=R^-$ the equilibrium $z=\alpha$ is a knife-edge (non-attracting) point and therefore does not belong to $\mathcal N_\alpha^{\mathrm{s}}$.
Then by Theorem~\ref{thm:conv_Af} and Corollary~\ref{cor:Af_morality}, the stable equilibrium adoption levels coincide with the $\alpha$-rational attractors. Moreover, by \cite[Theorem~3]{agarwal2025two}, each limit point $z_\infty\in\mathcal A_f$ satisfies Definition~\ref{defn_alphaRNE}. Consequently, the reduced stable equilibrium set $\mathcal N_\alpha^{\mathrm{s}}$ is given by
\begin{equation}
\mathcal{N}_\alpha^{\mathrm{s}}=
\begin{cases}
\{0, R^+\}, & \text{if }\alpha>\nicefrac{1}{2},\ \Delta_P<\m/2,\\[1mm]
\{0\}, & \text{if }\alpha>\nicefrac{1}{2},\ \Delta_P\ge \m/2,\\[1mm]
\{0, 1-\alpha\}, & \text{if }\alpha\le\nicefrac{1}{2},\ \Delta_P<\m/2,\ \alpha<R^-,\\[1mm]
\{0, R^+,\alpha\}, & \text{if }\alpha\le\nicefrac{1}{2},\ \Delta_P<\m/2,\ R^-<\alpha<\nicefrac{1}{2},\\[1mm]
\{0, 1-\alpha\}, & \text{if }\alpha\le\nicefrac{1}{2},\ \Delta_P\ge \m/2.
\end{cases}
\end{equation}}
\hide{
Furthermore, by \cite[Theorem~3]{agarwal2025two}, each limit point $z_\infty\in\mathcal A_f$ satisfies the Definition \ref{defn_alphaRNE} of $\alpha$-RNE. Thus the reduced set of equilibria (denoted by $\NEc^{\mathrm{s}}$) is given as below:}
\hide{

We now obtain the stable subset of \eqref{eqn_set_rne_alpha_1}-\eqref{eqn_alpha_RNE}, using the results of \cite{agarwal2025two}. Towards this,  we begin with some notations and definitions. 

Let
\begin{equation}
M(z):=\mathbb{E}[a_{k+1}-z_k\mid z_k=z]
\end{equation}
denote the mean drift of the process at state $z$.

\begin{definition}\label{defn_filippov}
A point $z_\infty\in[0,1]$ is called an \textbf{$\alpha$-rational attractor} if there exists $\epsilon>0$ such that
$\mathrm{sign}\{(z-z_\infty)M(z)\}$ is negative for all
$z\in(z_\infty-\epsilon,z_\infty+\epsilon)\setminus\{z_\infty\}$.
\end{definition}
Let $\mathcal A_f$ denote the set of all $\alpha$-rational attractors.

Observe that the utility difference function $h(\cdot)$ defined in \eqref{eqn_h_fun} is continuous on $[0,1]$ and has finitely many zeros with sign changes in every neighborhood. Hence, Assumption (A) in \cite{agarwal2025two} is satisfied. The following convergence result therefore applies.

We first introduce some notations, as in \cite{agarwal2025two}, to describe the dynamics formally. Let $R_k$ be the indicator that the $k$-th player is myopic rational, and let $F_k$ or $G_k$ denote the indicators that the $k$-th player chooses CT (action $1$) when being myopic rational or herding, respectively. The fraction of agents adopting the CT after $k$ updates equals,
\begin{eqnarray}\label{eqn_iterates_z_K}
    z_k=\frac{1}{k}\sum_{i=1}^k \left(R_i F_i+(1-R_i)G_i\right).
\end{eqnarray}
If the $(k+1)$-th player is myopic rational, then it chooses the action that maximizes its instantaneous utility, i.e.,
\begin{eqnarray}
    \mathbb P(F_{k+1}=1)\hspace{-2mm}&=&\hspace{-2mm}\mathbf 1_{\{1=a^R(z_k)\}}, \text{ where } \label{eqn_rational_dec}\\
a^R(z)\hspace{-2mm}&:=&\hspace{-2mm}\min \left\{i:\, i\in\arg\max_{a\in\mathcal A} u(a,z)\right\},
\end{eqnarray}
where  the ties are broken in favor of action $1$. If the $(k+1)$-th player exhibits herding behavior, then it adopts the currently popular action:
\begin{eqnarray}\label{eqn_herding_dec}
    \mathbb P(G_{k+1}=1\mid R_{k+1}=0)
=\mathbf 1_{\{z_k\ge \nicefrac{1}{2}\}}.
\end{eqnarray}
The stochastic process $\{z_k\}_{k\ge 0}$ describes the evolution of
aggregate adoption under sequential play.  
Also from \eqref{eqn_iterates_z_K}, the iterates for $z_k$ can be re-written as below, 
\begin{align}\label{eqn_SA}
    z_{k+1} &= z_k + \frac{1}{k+1}g(R_{k+1}, F_{k+1}, G_{k+1}, z_k), 
    \\
    &\mbox{ where } g(R, F, G, z) := RF+(1-R)G-z.
\end{align}
The above iterative dynamics therefore resemble those analyzed in \cite{agarwal2025two}. We now obtain the stable subset of \eqref{eqn_set_rne_alpha_1}-\eqref{eqn_alpha_RNE}, using the results of \cite{agarwal2025two}. Towards this,  we begin with some notations and definitions. 

\begin{definition}\label{defn_filippov}
 \textit{   A point $z_\infty \in [0,1]$ is called an \textbf{$\alpha$-rational attractor} if for some $\epsilon > 0$, $\mbox{sign}\big\{(z-z_\infty)M(z)\big\}$ is negative for all $z \in (z_\infty - \epsilon, z_\infty + \epsilon)-\{z_\infty\}$, where 
\begin{eqnarray}
M(z):=E[g(R_{k+1}, F_{k+1}, G_{k+1}, z_k)|z_k = z].
\end{eqnarray}}
\end{definition}
\medskip
Let  ${\cal A}_f$ denote the set of all possible $\alpha$-rational attractors. 

Observe that the function $h(\cdot)$ in \eqref{eqn_h_fun} is continuous on $[0,1]$ and has finitely many zeros, with a change of sign in every neighborhood of each zero. Hence \cite[assumption (A)]{agarwal2025two} is satisfied. Then the following theorem is a direct consequence of \cite[Theorem 1]{agarwal2025two}. 
\begin{theorem}\label{thm:conv_Af}
Consider the behavioral dynamics described in \eqref{eqn_iterates_z_K}-\eqref{eqn_herding_dec}. Then $z_k \to \mathcal A_f \quad \text{as } k\to\infty,$ almost surely. \eop
\end{theorem}

Now we identify the set $\mathcal A_f$  using \cite[Theorem 4]{agarwal2025two} and summarize the result in the following corollary.
\begin{corollary}\label{cor:Af_morality}
The following statements hold for the above game.
\begin{itemize}
\item[(i)] If $\alpha \geq \nicefrac{1}{2}$, then
$$
\mathcal{A}_f  =
\begin{cases}
\{0,R^+\}, &\text{if } \Delta_P< \nicefrac{\m}{2},\\
\{0\}, &\text{if } \Delta_P \geq \nicefrac{\m}{2}.
\end{cases}
$$

\item[(ii)] If $\alpha < \nicefrac{1}{2}$, then $0\in \mathcal A_f$ and
\begin{eqnarray*}
  \hspace{-2mm}  S\setminus(\alpha,1-\alpha) \hspace{-2mm}&\subseteq& \hspace{-2mm}\mathcal A_f \subseteq
\big(S\setminus(\alpha,1-\alpha)\big)\cup\{\alpha,1-\alpha\}, \text{ where }\\
 \hspace{-2mm} S \hspace{-2mm}&\subseteq& \hspace{-2mm} \begin{cases}
\{0,R^+\}, &\text{if } \Delta_P< \nicefrac{\m}{2},\\
\{0\}, &\text{if } \Delta_P \geq \nicefrac{\m}{2}.
\end{cases}
\end{eqnarray*}
Moreover,
\begin{itemize}
    \item[(a)] $\alpha\in\mathcal A_f \ \text{only if}\ \Delta_P<2\alpha(1-\alpha)\m,$
    \item[(b)] $1-\alpha\in\mathcal A_f \ \text{only if}\ \Delta_P>2\alpha(1-\alpha)\m.$ \eop
\end{itemize}
\end{itemize}
\end{corollary}
Furthermore, by \cite[Theorem~3]{agarwal2025two}, each limit point $z_\infty\in\mathcal A_f$ satisfies the Definition \ref{defn_alphaRNE} of $\alpha$-RNE. Thus the reduced set of equilibria (denoted by $\NEc^{\mathrm{s}}$) is given as below: }
\hide{
The above iterative equation resembles a stochastic approximation (SA) scheme (see, e.g., \cite{borkar2009stochastic, kushner2003stochastic}). Such iterates typically converge to the attractors of the associated differential inclusion (DI), see \cite{filippov1960differential}, given by
\begin{eqnarray}\label{eqn_DI}
    \dot{z} \in M(z):=E[g(R_{k+1}, F_{k+1}, G_{k+1}, z_k)|z_k = z].
\end{eqnarray}
Evaluating the conditional expectation gives
\begin{eqnarray}
M(z)\hspace{-3mm}&=&\hspace{-3mm} \alpha  1_{\{u(1, z) \ge u(2, z)\}} + (1-\alpha)1_{\{z \geq \nicefrac{1}{2}\}}-z, \\
\hspace{-3mm}&=&\hspace{-3mm} \alpha 1_{\{(z-R^-)(z-R^+) \leq 0\}} + (1-\alpha)1_{\{z \geq \nicefrac{1}{2}\}}-z.
\end{eqnarray}
 Finally, the following result,
adapted from \cite{agarwal2025two}, characterizes its long-run behavior of dynamics.

The above theorem shows that the long-run behavior of the stochastic process is governed by the set of $\alpha$-rational attractors of the associated DI, which is formally defined as below.
\begin{definition}\label{defn_filippov}
    A point $z_\infty \in [0,1]$ is called an \textbf{$\alpha$-rational attractor} if for some $\epsilon > 0$, $\mbox{sign}\big\{(z-z_\infty)M(z)\big\}$ is negative for all $z \in (z_\infty - \epsilon, z_\infty + \epsilon)-\{z_\infty\}$.
\end{definition}
We next characterize the set $\mathcal A_f$ of $\alpha$-rational attractors associated with the DI \eqref{eqn_DI}, using \cite[Theorem 4]{agarwal2025two} and summarize the result in the following corollary.
\begin{corollary}\label{cor:Af_morality}
The following statements hold for the above game.
\begin{itemize}
\item[(i)] If $\alpha \geq \nicefrac{1}{2}$, then
$$
\mathcal{A}_f  =
\begin{cases}
\{0,R^+\}, &\text{if } \Delta_P< \nicefrac{\m}{2},\\
\{0\}, &\text{if } \Delta_P \geq \nicefrac{\m}{2}.
\end{cases}
$$

\item[(ii)] If $\alpha < \nicefrac{1}{2}$, then $0\in \mathcal A_f$ and
\begin{eqnarray*}
  \hspace{-2mm}  S\setminus(\alpha,1-\alpha) \hspace{-2mm}&\subseteq& \hspace{-2mm}\mathcal A_f \subseteq
\big(S\setminus(\alpha,1-\alpha)\big)\cup\{\alpha,1-\alpha\}, \text{ where }\\
 \hspace{-2mm} S \hspace{-2mm}&\subseteq& \hspace{-2mm} \begin{cases}
\{0,R^+\}, &\text{if } \Delta_P< \nicefrac{\m}{2},\\
\{0\}, &\text{if } \Delta_P \geq \nicefrac{\m}{2}.
\end{cases}
\end{eqnarray*}
Moreover,
\begin{itemize}
    \item[(a)] $\alpha\in\mathcal A_f \ \text{only if}\ \Delta_P<2\alpha(1-\alpha)\m,$
    \item[(b)] $1-\alpha\in\mathcal A_f \ \text{only if}\ \Delta_P>2\alpha(1-\alpha)\m.$ \eop
\end{itemize}
\end{itemize}
\end{corollary}
Furthermore, by \cite[Theorem~3]{agarwal2025two}, each limit point $z_\infty\in\mathcal A_f$ satisfies the Definition \ref{defn_alphaRNE} of $\alpha$-RNE . Thus, every $\alpha$-rational attractor corresponds to an $\alpha$-RNE.

Finally, we relate the attractor characterization to the equilibrium sets in
\eqref{eqn_set_rne_alpha_1}-\eqref{eqn_alpha_RNE}. Since the equilibria $0$ and $R^-$ are unstable, we define the reduced set of stable equilibria
by removing these unstable equilibria from $N_\alpha$ and denote it by
$N_\alpha^{\mathrm{s}}$.}
\hide{
From \eqref{eqn_set_rne_alpha_1}, the reduced stable set of classical equilibria is
$$
\NEc^{\mathrm{s}}=
\begin{cases}
\{0, R^+\}, & \text{if }\Delta_P<\m/2,\\
\{0\}, & \text{if }\Delta_P \geq \m/2.
\end{cases}
$$
We know that $\mathcal{N}^{\mathrm{s}}_\alpha = \NEc^{\mathrm{s}}$ for all $\alpha > \nicefrac{1}{2}$. From \eqref{eqn_alpha_RNE}, for $\alpha\le \nicefrac{1}{2}$ the reduced stable equilibrium set is
\begin{eqnarray}
   \mathcal{N}_\alpha^{\mathrm{s}}=
\begin{cases}
\{0, 1-\alpha\}, & \text{if }\Delta_P<\m/2,\ \alpha<R^-,\\
\{0, R^+,\alpha\}, & \text{if }\Delta_P<\m/2,\ R^- < \alpha< \nicefrac{1}{2},\\
\{0, 1-\alpha\}, & \text{if }\Delta_P\ge \m/2.
\end{cases} 
\end{eqnarray}
In particular, at $\alpha=R^-$ the equilibrium $z=\alpha$ is a knife-edge (non-attracting) point and therefore does not belong to $\mathcal N_\alpha^{\mathrm{s}}$.}
\hide{
{\color{blue} 
\begin{prop}\label{prop:eq_three_types_cases} The stable equilibrium adoption set $\mathcal N^S_{\alpha_r}$ is given as:
\noindent\textbf{(i) If $\Delta_P>\nicefrac{\m}{2}$.}
$$
\mathcal N^S_{\alpha_r}=
\left\{
\begin{array}{ll}
\alpha_m, & \text{if } \alpha_m<\nicefrac{1}{2},\\
\alpha_m+\alpha_h, & \text{if } \alpha_r < \nicefrac{1}{2}.
\end{array}
\right.
$$
\noindent\textbf{(ii) If $\Delta_P=\nicefrac{\m}{2}$.}
$$
\mathcal N^S_{\alpha_r}=
\left\{
\begin{array}{ll}
\alpha_m, & \text{if } \alpha_m<\nicefrac{1}{2},\\
\alpha_m+\alpha_h, & \text{if } \alpha_r<\nicefrac{1}{2},.
\end{array}
\right.
$$
\noindent\textbf{(iii) If $\Delta_P<\nicefrac{\m}{2}$.}
$$
\mathcal N^S_{\alpha_r}=
\left\{
\begin{array}{ll}
\alpha_m, & \text{if }  \alpha_m < R^-,\\
\alpha_m+\alpha_r, & \text{if }\ R^-<\alpha_m+\alpha_r < \nicefrac{1}{2},\\
\alpha_m+\alpha_h, & \text{if } \alpha_r < R^-,\\
R^+, & \text{if } \alpha_r > R^-.
\end{array}
\right.
$$
Moreover, there is no equilibrium with 
$z\in\big[\nicefrac{1}{2},R^+\big).$
\end{prop}

\begin{prop}\label{prop:eq_three_typ_dem}
Assume $\alpha_r+\alpha_h+\alpha_{um}=1$.
Then the stable equilibrium adoption set $\mathcal N_{\alpha_r}^S$ is given as:

\noindent\textbf{(i) If $\Delta_P>\nicefrac{\m}{2}$.}
$$
\mathcal N_{\alpha_r}^S =
\left\{
\begin{array}{ll}
0, & \text{always},\\
\alpha_h, & \text{if } \alpha_r+\alpha_{um} < \nicefrac{1}{2}.
\end{array}
\right.
$$
\noindent\textbf{(ii) If $\Delta_P=\nicefrac{\m}{2}$.}
$$
\mathcal N_{\alpha_r}^S=
\left\{
\begin{array}{ll}
0, & \text{always},\\
\alpha_h, & \text{if } \alpha_r+\alpha_{um} < \nicefrac{1}{2}.
\end{array}
\right.
$$
\noindent\textbf{(iii) If $\Delta_P<\nicefrac{\m}{2}$.}
$$
\mathcal N_{\alpha_r}^S=
\left\{
\begin{array}{ll}
0, & \text{always},\\
\alpha_r, & \text{if } R^-<\alpha_r<\nicefrac{1}{2},\\
\alpha_r+\alpha_h, & \text{if } R^- < \alpha_{um} < \nicefrac{1}{2},\\
\alpha_h, & \text{if } \alpha_{um}+\alpha_r < R^-,\\
R^+, &\text{if } \alpha_{um}\le R^-<\alpha_{um}+\alpha_r.
\end{array}
\right.
$$
\end{prop}}}

{\bf Remarks:} 
When the price disadvantage of CT is sufficiently high (with $\Delta_P \ge \nicefrac{\m}{2}$), and the population contains a large fraction of rationals (with $\alpha_R \ge \nicefrac{1}{2}$), then nobody adopts CT --- observe in the first row of Table~\ref{table_N_alpha_S_herding_only}, the only stable $\balpha$-RNE is $0$. 
However if the population is composed of a larger fraction of herding crowd (with  $\alpha_R < \nicefrac{1}{2}$), there is a chance to successfully promote CT --- in the second row, we have    $\N_\alpha^S = \{0,\alpha_H\}$  when $\Delta_P \ge \nicefrac{\m}{2}$. 

Nonetheless $0$ is still a stable equilibrium, and   the social planner should work towards emergence  of (or convergence to)   desirable  $\balpha$-RNE, that of $\alpha_H$ --- this may be possible by aggressive advertisements, rapid awareness programs, etc., which can propel the herding crowd towards more desirable CT  adoption (see \cite{vyas2026games} for similar design details  with herding crowd). Thus \textit{if herding behavior is predominant in the population, there is a possibility to  successfully make a substantial population embrace  CT, in spite of huge price disadvantage}. 

When \textit{the price disadvantage of CT is not too large} (with $\Delta_P < \nicefrac{\m}{2}$), the moral incentive becomes strong enough to potentially offset the extra cost of CT. In this regime, even with all rational players  ($\alpha_R=1$), a non-zero value  $R^+ $ is one of the stable equilibria  --- the stable set in the first row of Table~\ref{table_N_alpha_S_herding_only} is   $\NE^S=\{0,R^+\}$. However,  from \eqref{eqn_roots_h}, the magnitude of $R^+$ is  inversely proportional to  the ratio, $\nicefrac{\Delta_P}{\m}$. Thus, when the additional cost of CT remains significant, the achievable CT adoption level with purely rational  population is limited -- in fact this is true  even when the population is dominated by rationals (more than $50\%$).  

Once again, the presence of a substantial herding crowd ($\alpha_R \leq \nicefrac{1}{2}$) can significantly alter this outcome -- the equilibrium $R^+$ may be replaced by $\alpha_H$, enabling the possibility of  a much higher level of CT adoption.

Thus in all,  to mobilize a large   fraction   towards  CT:
\textit{\begin{itemize}
    \item either the price disadvantage is sufficiently small,
    \item or the population     contains a substantial fraction of  herding crowd and an effective awareness campaign and/or aggressive advertisements can be conducted. 
\end{itemize}
}

In this section, we analyzed a scenario with only herding and rational agents. But one may notice many other behavioral patterns. Another relevant predominant behavior is  inertia or adherence to old techniques. We next consider the same.


\section{Agents with inertia}\label{sec_extreme_moral_types}
In transition environments like the one studied here, inertia often manifests as resistance to adopting CT, which are typically more expensive than the incumbent alternatives. As a result, some individuals continue using the unclean technology despite the presence of moral or social incentives. Motivated by this, 
we   consider a more detailed study by including   agents with such inertia\footnote{One can consider a study with  strongly moral agents that  would readily accept  CT. The results were not much different from those in previous section and due to lack of space, we could not include this study. } and  refer to them as \textit{`lethargic or $L$ agents'}.

The population now consists of three types of agents: rational, herding, and lethargic, let  $\Theta = \{R,H,L\}$ represent these types. Let the corresponding proportions be $\alpha_R$, $\alpha_H$, and $\alpha_{L}$, respectively,  and let $\balp := (\alpha_R, \alpha_H, \alpha_L)$.
We  now adopt the multi-type mean field game model developed in \cite{vyas2026multitype} to analyze dynamics as in \eqref{eqn_iterates_z_K}, but now including $L$ agents and towards that one needs to capture (if possible) the choices of all the types through maximizing some type-wise utility functions.  It is easy to define such utility functions:  
the utility of type $R$ agents is already given by \eqref{eqn_utility_players1}, while that of  types $H$ and $L$ can be captured by

\vspace{-3mm}
{\small\begin{equation}\label{eqn_util_herd_leth}
\hspace{-2mm}u_H(a,z) =
\begin{cases}
z, & \hspace{-2.5mm}\text{if }a=1,\\
1-z, &\hspace{-2.5mm}\text{if } a=2,
\end{cases} 
\ \ \ u_{L}(a,z) =
\begin{cases}
0, &\hspace{-2.5mm}\text{if } a=1,\\
1, &\hspace{-2.5mm}\text{if } a=2.
\end{cases}
\end{equation}}%
(observe for example  that the  herding crowd chooses the action  of the majority and hence chooses CT if and only if $z  > (1-z)$, which is precisely captured by $u_H$ function).

We  utilize the equilibrium notion   of \cite{vyas2026multitype} to study this game. To this end, we reproduce \cite[Definition~1]{vyas2026multitype} in our notations. 
 \begin{definition}[MT-AMFE]\label{defn_MT-MFE_binary} 
 We call $z^*$ a multi-type aggregate mean-field equilibrium (MT-AMFE) if it satisfies: \begin{itemize}
     \item $z^* := \sum_{\theta\in\Theta}\alpha_\theta \mu_\theta^*$, where $\mu_\theta^*$, for each $\theta \in \Theta$,  is the fraction of type $\theta$ agents adopting CT;
     \item the choices at equilibrium are    type-wise optimal, i.e.,
     \begin{equation}\label{eqn_MT_MFE_BR}
     \hspace{-1mm}
\support(\mu_\t^*) \subseteq {\rm Arg} \max_{a\in\A_\t} u_\t(a,z^*),  \mbox{ for each } \theta.
\end{equation}
 \end{itemize}
\end{definition}

\hide{
\subsection*{Multi-type aggregate mean-field equilibrium (MT-AMFE)}
Let $\Theta$ be a finite set of player types, and let $\alpha_\theta$ be the fraction of players of type $\theta$ in the population, with $\sum_{\t\in\Theta}\alpha_\t=1$. Each type $\t$ chooses
an action from $\A :=\{1,2\}$. Let $\bmu_\t = (\mu_\theta,1-\mu_\theta)$ denote the 
empirical distribution over actions of type-$\t$ agents, where $\mu_\theta$ represents the fraction of type-$\theta$ players adopting CT.

Define the aggregate population measure $\bm z = (z,1-z)$ by
\begin{equation}\label{eqn_agg_measure_binary}
\bm z := \sum_{\t\in\Theta}\alpha_\t \bmu_\t.
\end{equation}
where $z$ represents the aggregate CT adoption level.
The utility of a type-$\t$ player choosing action $a\in\A$ depends on the aggregate state only through $z$, and is denoted by $u_\t(a,z)$.

\begin{definition}[MT-AMFE]\label{defn_MT-MFE_binary} 
Consider a  game  with  ${\cal A} = \{1,2\}$.
A tuple $(\mu_\t^*)_{\t\in\Theta}$ is called a multi-type mean-field Nash equilibrium (MT-MFE) if, for every $\t\in\Theta$, (see \eqref{eqn_supp_def})
\begin{equation}\label{eqn_MT_MFE_BR}
\support(\mu_\t^*) \subseteq \arg\max_{a\in\A_\t} u_\t(a,z^*), \text{ with } z^* := \sum_{\theta\in\Theta}\alpha_\theta \mu_\theta^*.
\end{equation}
We call $z^*$ a multi-type aggregate mean-field equilibrium (MT-AMFE).
\end{definition}}
\medskip
\noindent
Observe that when $\Theta=\{R,H\}$, the MT-AMFE defined above coincides with the $\alpha$-RNE introduced in Definition~\ref{defn_alphaRNE}.
\hide{
Let the population be composed of rational, herding, and lethargic agents with proportions $\alpha_R$, $\alpha_H$, and $\alpha_{L}$, respectively, satisfying $\alpha_R + \alpha_H + \alpha_{L} = 1$ (let $\balp = (\alpha_R, \alpha_H, \alpha_L)$). Accordingly, $\Theta = \{R,H,L\}$. The utility of type $R$ agents is given by \eqref{eqn_utility_players1}, while the utilities of types $H$ and $L$ are specified by
\begin{equation*}
\begin{aligned}
u_H(a,z) &=
\begin{cases}
z, & \text{if }a=1,\\
1-z, &\text{if } a=2,
\end{cases}
u_{L}(a,z) =
\begin{cases}
0, &\text{if } a=1,\\
1, &\text{if } a=2.
\end{cases}
\end{aligned}
\end{equation*}}
One can characterize the equilibria (i.e., MT-AMFEs) for the game with lethargic agents using Definition~\ref{defn_MT-MFE_binary}. For brevity, we do not list all MT-AMFEs explicitly here; instead, we directly present the subset of stable equilibria.

To once again study the stability of MT-AMFEs from a dynamic perspective, we consider the turn-by-turn dynamics (as discussed in Subsection~\ref{sub_sec_turn_by_turn}), extended to the current case. As shown in \cite[Theorems~2 and~5, Definition~4]{vyas2026multitype}, this process converges almost surely to a singleton internally chain transitive (ICT) set, which is  an MT-AMFE.

Since the construction of the turn-by-turn dynamics in the multi-type setting follows the same principles as in the two-type case (see \eqref{eqn_iterates_z_K}), we do not repeat it here. Furthermore, to establish the stability of an MT-AMFE, we use Definition~\ref{defn_filippov}, as in the two-type framework. In particular, for the MgCT game with lethargic agents, define
\begin{equation}\label{eqn_M_I_fun}
    M^I (z) :=  \alpha_R \mathbf 1_{\{h(z) \geq 0\}} + \alpha_H \mathbf 1_{ \{z \ge \nicefrac{1}{2}\}} -z.
\end{equation}
We say \textit{$z^{*}_S$  is a stable  equilibrium adoption level if it is an MT-AMFE (satisfies \eqref{eqn_MT_MFE_BR}) and  is an attractor  as in Definition~\ref{defn_filippov}, with $M(\cdot)=M^I(\cdot)$.} The stable equilibria are summarized  in Table \ref{table_N_alpha_S_herding_Lethargic}; in particular,   $z_S^*$ is a stable equilibrium iff it satisfies  the
conditions listed there (the computations are in the Appendix). Table \ref{table_N_alpha_S_herding_only} is rewritten as   Table \ref{table_N_alpha_S_herding_L}, for ease of comparing the scenarios with and without lethargic $L$ agents.




\hide{
\begin{table}[h]
    \centering
\vspace{3mm}
\begin{tabular}{|c|c|}
\hline 
  $\alpha_R  \ge 1/2$   &   $
 \NE^S =     
\begin{cases}
\{0, R^+\},& \text{if }   \Delta_P<\nicefrac{\m}{2},\\
\{0\}, \hspace{-1mm}& \text{if } \ \Delta_P\ge \nicefrac{\m}{2},  
\end{cases} 
$\\\hline 
 $\alpha_R  < 1/2$      & 
  $  \NE^S  =
\begin{cases}
\{0, \alpha_H\}, \hspace{-1mm}& \text{if } \Delta_P<\nicefrac{\m}{2},\  R^+ < \alpha_H ,\\
\{0, R^+,\alpha_R\}, \hspace{-1mm}& \text{if } \Delta_P<\nicefrac{\m}{2},\ \alpha_H \leq R^+,\\
\{0,  \alpha_H\}, \hspace{-1mm}& \text{if } \Delta_P\ge \nicefrac{\m}{2}.\end{cases}$
\\ \hline 
\end{tabular}
\caption{Stable equilibria with herding, $\alpha_R  = \alpha$, $\alpha_H=1-\alpha$ \label{table_N_alpha_S_herding_only}}
\vspace{-15mm}

\begin{tabular}{|c|c|}
\hline 
  $ z_S^* <  \nicefrac{1}{2}$   &   $
 z_S^* \in      
\begin{cases}
\{0, \alpha_R \}, & \text{if }     \nicefrac{1}{2} \leq \alpha_H \leq R^+,
\hspace{-1mm} \\ 
\{0\}
& \text{else } \    
\end{cases} 
$\\\hline 
 $z_S^*  > 1/2$      & 
  $   z_S^*  =
\begin{cases}
  \alpha_H  \hspace{-1mm}& \text{if }  \   R^+ < \alpha_H,\\
  R^+ , \hspace{-1mm}& \text{if }   \alpha_H \leq R^+  .\end{cases}$
\\ \hline 
\end{tabular}
\caption{With $\Delta_P<\nicefrac{\m}{2} $}
\vspace{-15mm}
\end{table}}
Most of the implications are as in the previous case without $L$ agents. However, there is one interesting distinction. From the last row of Table \ref{table_N_alpha_S_herding_Lethargic}, one can notice that all the rational agents choose CT at equilibrium when $\alpha_R +\alpha_H < R^+$; in fact even in the second row of both the tables \ref{table_N_alpha_S_herding_Lethargic} and \ref{table_N_alpha_S_herding_L},  all the rational agents   choose CT adoption, when the $H$ or $H+L$ agents choose the other alternative (see the boxes). Hence, in many scenarios, driven by morality incentives, the rational agents are compelled to choose CT  when a considerable fraction of  others choose the other-way.

However, such a compulsion breaks when there are too many $L$-agents and few rationals. 
When $\alpha_L > \nicefrac{1}{2}$ and $R^+ \leq \alpha_L+\alpha_H$, we have  $\N^S = \{0\}$  (see dashed boxes in Table \ref{table_N_alpha_S_herding_Lethargic}, also observe none of the remaining cases including the third row are possible with $\alpha_R+\alpha_H < \nicefrac{1}{2}$) --- this is probably because, when $L$ agents are in majority,  the  herding crowd follows them and  the moral pressure on  rational agents reduces significantly  (term $(1-z)z \m$ in \eqref{eqn_utility_players1} is small when $z$ is near~0).    
\hide{
\sout{Furthermore, lethargic agents act as a structural barrier that limits the achievable level of CT adoption. In particular, when lethargic agents dominate the population, i.e., $\alpha_L > \nicefrac{1}{2}$ and $R^+ \leq \alpha_L+\alpha_H$, then no one adopts CT (see Table~\ref{table_N_alpha_S_herding_Lethargic}) --- when lethargic agents dominate, herding players follow them, and since the CT adoption level $z$ remains too small for the  social (moral) pressure $\m z (1-z)$ in \eqref{eqn_utility_players1} to offset the price disadvantage of CT, rational agents prefer the unclean technology. Hence the entire population ends up using the unclean technology.}}

So far, we have studied how varieties of population respond to the availability of CTs when decisions are driven by prices, moral incentives, and behavioral tendencies. However, the primary motivation for adopting CTs is to limit the environmental hazards caused by pollution, particularly the accumulation of atmospheric $\coo$. We now investigate \textit{how the outcome of the game changes when (rational) agents     predict the impact of their collective choices on atmospheric $\coo$ concentration and consider the same while making decisions}. 

Towards that, we next extend the population game  by incorporating the effects of $\coo$ concentration into the utility function \eqref{eqn_utility_players1}. The goal here is to analyze if such a consideration  can create sufficient  incentives to improve  CT adoption. 

\section{Game influenced by Environmental effects }\label{sec_Game_influenced}
We again consider three types of agents, the choices of $H$ and $L$   agents remain as before.
Towards altering the rational utility 
 function \eqref{eqn_utility_players1}, 
we next  discuss  the evolution of atmospheric
$\coo$ concentration, 
 represented by $c(t)$ at time~$t$. 
This evolution  is modeled using an ordinary differential equation (ODE) driven by a function $f(c;z)$  that also depends upon $z$, the   CT adoption level of the population (\cite{caetano2008optimal, misra2013mathematical,joos2013carbon}):

\vspace{-2mm}
{\small\begin{equation}\label{eqn_general_ODE}
    \frac{dc}{dt} = f(c;z), \text{ with } c(0) > 0.
\end{equation}}
Our results of this section are valid under minimal assumptions:  (i)  a Lipschitz function $f$
 that ensures the existence of ODE solution on the interval of interest (say   $T$);  and  ii) $f$ is non-increasing in $z$ to rightfully represent the impact of population choices (basically  higher
adoption of CT reduces emission).  There is a vast literature that studies the evolution of $\coo$   (see e.g., \cite{caetano2008optimal, misra2013mathematical,joos2013carbon}), any of these  models can be used to define $f $ in \eqref{eqn_general_ODE} after appropriately introducing the influence of $z$ --- for example, in the $\coo$ evolution model  \cite[equation (1)]{misra2013mathematical}, the term  $N$  representing the human population size  can be modified as $N(1-z)$ in $\nicefrac{dX}{dt}$ (the derivative of $\coo$ evolution) to indicate the effective population size  that adversarially influences the $  \coo$ evolution.

\hide{ 
\subsection{Atmospheric $\coo$ Dynamics}
Let $C_c(t)$ denote the atmospheric $\coo$ concentration at time $t$. The evolution of $C_c$ depends on the aggregate adoption level $z$ of CTs in the population. Hence the dynamics of $C_c$ can be described by
\begin{equation}\label{eqn_general_ODE}
    \frac{dC_c}{dt} = f(C_c;z), \text{ with } C_c(0) > 0.
\end{equation}
We assume that $f$ is non-increasing in $z$, reflecting that higher adoption of CT reduces emissions. Many climate models used in the literature \cite{} fall within this formulation. 
For instance, in the $\coo$ evolution model of \cite{}, the human population size $H_n$ can be modified as $H_n(1-z)$, leading to
$$
f(C_c;z,q,\gamma,H_n,\gamma_0) = q + \gamma H_n(1-z) - \gamma_0 C_c.
$$

We require the existence of a solution for \eqref{eqn_general_ODE} for each $z$ on the required horizon $T$ (can also be infinite). 
}

We assume the rationals perceive a negative environmental cost $e(z)  $ which  is some  function   of $z$-influenced  $\coo$ trajectory  $\{c(t; z) \}_{t\in T}$  --- for example, it could be due to average predicted discomfort   endured during the  period $T$  captured   by $ e (z) = \nicefrac{1}{T}\int_T \phi(t,c(t;z)) dt$ for some function $\phi$, or it could be due to the influence of the long run concentration captured by $ e (z) = \lim_{t\to \infty} \phi(t, c(t;z))$ (when $T = [0, \infty))$, etc.


Under the monotonicity assumption on~$f$, using standard ODE results, \textit{one can assume $e(z)$ is non-increasing in $z$}. 




Now consider the modified population game where only rational utility \eqref{eqn_utility_players1} changes as below (for $\rho >0$):
%

\vspace{-3mm}
{\small\begin{equation}\label{utility_players}
    u^{E}_R(a, z) = \begin{cases}
    -P_c + (1 - z) z \, \m - \rho e (z),& \hspace{-1mm}\text{if } a = 1,\\[2pt]
    -P_{uc} - (1 - z) z \, \m - \rho e (z),& \hspace{-1mm}\text{if } a = 2.
\end{cases}
\end{equation}}%
In the above, the term $\rho e (z)$ is incorporated symmetrically 
because $\coo$ affects all individuals regardless of the technology they adopt. The utilities  of $H$ and $L$  agents remain the same as in \eqref{eqn_util_herd_leth}. 
We immediately make a striking observation ---  the utility difference function, obtained now using \eqref{utility_players}, is:
\begin{equation}\label{eqn_h_c_fun}
    h^E(z) :=  u^E_R(1, z) - u^E_R(2, z) = 2 (1-z) z \, \m-\Delta_P,
\end{equation}
which exactly coincides with \eqref{eqn_h_fun}. Hence, there is absolutely no change in the set of stable MT-AMFEs, see \eqref{Eqn_Mz}, \eqref{eqn_M_I_fun}; the stable equilibria are again as in Table~\ref{table_N_alpha_S_herding_Lethargic}.

\textit{ 
Thus even the individual consideration of environmental cost $e(z)$ was not effective 
%
in inducing a collective shift toward higher CT adoption. 
In some sense, individuals effectively act as bystanders with respect to altering environmental hazards (that negatively influence their own utilities), despite being active participants in the decision process.}



This negative result naturally raises the following question: what happens if $e(z)$ affects different groups in the population differently? In particular, can their decisions based on differentiated utilities influence the overall outcome?
 To examine this possibility,  we consider $n$  groups and  simply make $\rho$ in \eqref{utility_players} group dependent to obtain group-wise rational utilities:   

\vspace{-3mm}
{\small\begin{equation}\label{eqn_util_sens_ag}
 \hspace{-3mm}   u^E_{R,g_i}(a, z) = 
\begin{cases}
-P_c + (1 - z)z \, \m - \rho_i   e(z),  & \hspace{-1mm}\text{if } a = 1, \\[2pt]
-P_{uc} - (1 - z)z \, \m - \rho_i   e (z), & \hspace{-1mm}\text{if } a = 2.
\end{cases}
\end{equation}}%
However, clearly, the group-wise difference functions (defined as in \eqref{eqn_h_c_fun})  are exactly the same as in \eqref{eqn_h_fun}   for all the groups. Hence even the heterogeneity or extra-sensitive population could not drive towards a different outcome.

Thus  increasing the moral incentives or reducing the price disadvantage with CT or having a population with larger herding crowd  are the only factors that can improve CT adoption, while  the health hazards induced by the individual choices are completely ineffective. 

\hide{Before concluding we would like to remark that in \eqref{} and \eqref{},

is the only way the CT adoption can be improved  --- of course the existence of herding crowd improves the chances, but requires

This negative result naturally raises the following question: what happens if $e(z)$ affects different groups in the population differently? In particular, can their decisions influence the overall outcome?  To examine this possibility, we extend the analysis to populations composed of multiple groups with heterogeneous sensitivities to atmospheric $\coo$.

\subsection{Heterogeneous Sensitivity to $\coo$ Concentration}
We again consider a population composed of $R,H$ and $L$ type agents, but now the population is divided into $n$ groups $g_1, g_2, \dots, g_n$ that differ in their sensitivity to atmospheric $\coo$. Each group $g_i$ constitutes a fraction $f_i$ of the population, with $\sum_{i=1}^n f_i = 1$, and is characterized by a sensitivity parameter $r_i \geq 0$, which measures the perceived dis-utility from environmental damages.

Let $z_i$ denote the fraction of agents in group $g_i$ who adopt CT,  and let $z = \sum_{i=1}^n f_i z_i$ denote the aggregate CT adoption level in the population. Then the utility of a $R$ type agent belonging to group $g_i$ is defined as (see \eqref{eqn_utility_players1})
\begin{equation}\label{eqn_util_sens_ag}
 \hspace{-3mm}   u_{g_i}(a, z) = 
\begin{cases}
-P_1 + (1 - z)z \m - r_i \rho e(z),  & \hspace{-2mm}\text{if } a = 1, \\
-P_2 - (1 - z)z \m - r_i \rho e(z), & \hspace{-2mm}\text{if } a = 2,
\end{cases}
\end{equation}
where $\rho > 0$ and the term $r_i \rho e(z)$ captures the heterogeneous dis-utility induced by the steady-state $\coo$ concentration. The utilities of $H$ and $L$ type agents belonging to any group remain the same as in \eqref{eqn_util_herd_leth}. The corresponding utility difference function using \eqref{eqn_util_sens_ag} is
\begin{eqnarray}\label{eqn_h_c_fun_}
    h^s(z) :=  u(1, z) - u(2, z) = P_2 - P_1 + 2 (1-z) z \m,
\end{eqnarray}
which again coincides with \eqref{eqn_h_fun}. Hence, the set of stable MT-AMFEs remains the same as summarized in Table \ref{table_N_alpha_S_herding_Lethargic}.

This result shows that \textit{ heterogeneity in environmental exposure does not lead to different strategic incentives when environmental damages are independent of individual actions. Even groups that are more sensitive to $\coo$ face same incentive as others, since their individual choices cannot influence environmental outcomes. As a result, they cannot shift the population toward greater adoption of CTs. Thus, heterogeneity in environmental exposure alone does not change equilibrium outcomes.} 

Incentives that depend directly on actions -- such as subsidies, taxes, or moral rewards -- could potentially alter strategic behavior and increase adoption.

Before proceeding further, we note that, however in reality, individuals may misunderstand environmental impacts or underestimate their severity, that is, may not fully observe the $\coo$ concentration while decision making. If CT adoption remains limited even under full observation, this  indicates that the incentives to adopt CTs are weak. In more realistic situations where environmental damages are only partially (less) observed, the level of CT adoption would likely be even lower. Hence, analyzing the case with full observation remains useful.}

\section{Conclusion}
This paper studies clean technology (CT)  based products adoption in a large population  of consumers  with heterogeneous behavioral tendencies --- we consider   rational agents (trade-off moral incentives against price disadvantage of CT products),  herding crowd    (who follow the majority), and agents that exhibit inertia towards adopting new technologies.
  We identify and analyze   stable multi-type  mean-field equilibrium CT adoption levels (attractors of  a certain stochastic game dynamics) depending  upon  the price disadvantage, moral incentives, environmental ($\coo$) adversarial   effects, and the composition of the population. 

The realistic consideration of a variety of relevant behavioral tendencies,
along with some strategic   
dynamic decisions provides
several insights. When inertia is not too high in the population,  one can achieve widespread CT adoption, even with a big price disadvantage,  if the herding crowd constitutes a sufficient fraction -- influence through awareness campaigns can help. 
Morality incentives can be used to effectively   compel  the rational
crowd towards  CT, 
even if  the rest reject, when the latter proportion is not too high.  However, with a large proportion exhibiting inertia, moral pressure on rational agents can also break,  leading to  zero CT adoption.

Surprisingly, the inclusion of a negative predicted cost, proportional to the environmental damage resulting from continuing the  usage of  non-CT products,   did not alter the set of stable equilibria or the dynamic outcomes. 
Even the consideration of a highly sensitive rational population did not make a difference. In some sense, the rational agents   (in spite of actively
participating in  decision-making) become bystanders to their own environmental damage costs. 

\hide{

when a considerable
fraction of others choose the other-way.

When the rational crowd i  

When the number of agents with inertia is  small, widespread CT adoption requires either a sufficiently small price disadvantage 
or the presence of a substantial
herding crowd that can be influenced through awareness
campaigns. 
With intermediate fraction of agents with inertia, 
However, when the former agents are too many, they can also break the morality pressure of rational agents leading to  zero CT adoption.}

\hide{we find that individuals effectively behave as as bystanders with respect to environmental damage caused by $\coo$ concentration that negatively affect their own utilities, despite actively participating in the decision-making process. Furthermore, even heterogeneous sensitivities to environmental damages across groups do not change the equilibrium adoption levels. Future work could extend this framework by studying optimal policy design or incorporating richer network effects that influence technology adoption.}
 
\bibliographystyle{IEEEtran}
\vspace{-3mm}
\bibliography{Reference1}

@article{caetano2008optimal,
  author  = {Caetano, M. A. L. and Gherardi, D. F. M. and Yoneyama, T.},
  title   = {Optimal resource management control for {${CO}_2$} emission and reduction of the greenhouse effect},
  journal = {Ecological Modelling},
  volume  = {213},
  number  = {1},
  pages   = {119--126},
  year    = {2008}
}

@article{agarwal2025two,
  title={Two choice behavioral game dynamics with myopic-rational and herding players},
  author={Agarwal, Khushboo and Avrachenkov, Konstantin and Vyas, Raghupati and Kavitha, Veeraruna},
  journal={Proceedings of the ACM on Measurement and Analysis of Computing Systems},
  volume={9},
  number={1},
  pages={1--26},
  year={2025},
}

@article{carmona2022meanfield,
  author  = {R. Carmona and G. Dayanikli and M. Lauriere},
  title   = {Mean field models to regulate carbon emissions in electricity production},
  journal = {Dynamic Games and Applications},
  volume  = {12},
  number  = {3},
  pages   = {897--928},
  year    = {2022}
}

@book{filippov1960differential,
  title={Differential Equations with Discontinuous Right-Hand Side},
  author={Filippov, Aleksei F},
  journal={Matematicheskii sbornik},
  year={1988},
  publisher={Kluwer Academic Publishers}
}

@article{weitz2016oscillating,
  author  = {Weitz, Joshua S. and Eksin, Ceyhun and Paarporn, Keith and Brown, Sam P. and Ratcliff, William C.},
  title   = {Oscillating tragedy of the commons in replicator dynamics with environmental feedback},
  journal = {Proceedings of the National Academy of Sciences},
  volume  = {113},
  number  = {47},
  pages   = {E7518--E7525},
  year    = {2016},
  doi     = {10.1073/pnas.1604096113}
}

@article{tilman2020environmental,
  author  = {Tilman, Andrew R. and Plotkin, Joshua B.},
  title   = {Evolutionary games with environmental feedbacks},
  journal = {Nature Communications},
  volume  = {11},
  number  = {1},
  pages   = {915},
  year    = {2020},
  doi     = {10.1038/s41467-020-14531-6}
}

@book{nordhaus2008question,
  title={A question of balance: Weighing the options on global warming policies},
  author={Nordhaus, William},
  year={2008},
  publisher={Yale University Press}
}

@incollection{newell2010induced,
  title={The induced innovation hypothesis and energy-saving technological change},
  author={Newell, Richard G and Jaffe, Adam B and Stavins, Robert N},
  booktitle={Technological change and the environment},
  pages={97--126},
  year={2010},
  publisher={Routledge}
}

@article{joos2013carbon,
  author  = {Joos, F. and Roth, R. and Fuglestvedt, J. S. and Peters, G. P. et al.},
  title   = {Carbon dioxide and climate impulse response functions for the computation of greenhouse gas metrics: a multi-model analysis},
  journal = {Atmospheric Chemistry and Physics},
  volume  = {13},
  number  = {5},
  pages   = {2793--2825},
  year    = {2013}
}

@article{misra2013mathematical,
  title={A mathematical model to study the dynamics of carbon dioxide gas in the atmosphere},
  author={Misra, Arvind Kumar and Verma, Maitri},
  journal={Applied Mathematics and Computation},
  volume={219},
  number={16},
  pages={8595--8609},
  year={2013}
}

@article{vyas2026games,
  author  = {R. Vyas and K. Agarwal and K. Avrachenkov and V. Kavitha},
  title   = {Games with Rational and Herding Players},
  journal = {arXiv preprint arXiv:2602.02291},
  year    = {2026}
}

@book{sandholm2010population,
  author    = {W. H. Sandholm},
  title     = {Population Games and Evolutionary Dynamics},
  publisher = {MIT Press},
  address   = {Cambridge, MA},
  year      = {2010}
}

@article{aghion2016carbon,
  title={Carbon taxes, path dependency, and directed technical change: Evidence from the auto industry},
  author={Aghion, Philippe and Dechezlepr{\^e}tre, Antoine and Hemous, David and Martin, Ralf and Van Reenen, John},
  journal={Journal of Political Economy},
  volume={124},
  number={1},
  pages={1--51},
  year={2016},
  publisher={University of Chicago Press Chicago, IL}
}

@article{vyas2026multitype,
  author  = {R. Vyas and K. Das and V. Kavitha and S. Roy},
  title   = {Multi-type random game dynamics: limits at discontinuities and cyclic limits},
  journal = {arXiv preprint arXiv:2602.13032},
  year    = {2026}
}

@book{kushner2003stochastic,
  author    = {H. Kushner and G. Yin},
  title     = {Stochastic Approximation and Recursive Algorithms and Applications},
  publisher = {Springer},
  year      = {2003}
}

@book{borkar2009stochastic,
  title={Stochastic Approximation: A Dynamical Systems Viewpoint},
  author={Borkar, Vivek S},
  year={2009},
  publisher={Springer}
}

@inproceedings{agarwal2024balancing,
  title     = {Balancing rationality and social influence: Alpha-rational {Nash} equilibrium in games with herding},
  author    = {Agarwal, Khushboo and Avrachenkov, Konstantin and Kavitha, Veeraruna and Vyas, Raghupati},
  booktitle = {Proceedings of the International Conference on Game Theory for Networks (GameNets)},
  series    = {Lecture Notes in Computer Science},
  volume    = {13939},
  pages     = {91--107},
  year      = {2025},
  publisher = {Springer}
}

@article{acemoglu2012environment,
  author  = {D. Acemoglu and P. Aghion and L. Bursztyn and D. Hemous},
  title   = {The environment and directed technical change},
  journal = {American Economic Review},
  volume  = {102},
  number  = {1},
  pages   = {131--166},
  year    = {2012}
}
\appendix 
\noindent\textbf{Proof of Table  \ref{table_N_alpha_S_herding_Lethargic}:} By Definition \ref{defn_MT-MFE_binary} and using \eqref{eqn_utility_players1}, \eqref{eqn_util_herd_leth},\eqref{eqn_MT_MFE_BR}, an MT-AMFE  $z$~satisfies:
\begin{eqnarray}\label{eqnn_z_val}
    \hspace{-1mm} z = \alpha_R w + \alpha_H \mathbf{1}_{\{z \geq \nicefrac{1}{2}\}}, \   \support(w) \subseteq \arg\max_{a \in \{1,2\}} u_R(a,z),
\end{eqnarray}
where, $w:=\mu_R^*$,   denotes the fraction of rationals  choosing CT.
Define the left and right $\epsilon$–neighborhoods of $z$:
$$
N_\epsilon^-(z):=(z-\epsilon,z)\cap[0,1], \
N_\epsilon^+(z):=(z,z+\epsilon)\cap[0,1].
$$
\vspace{0.5mm}
\noindent\textbf{Case (i): when $\Delta_P \ge \nicefrac{\m}{2}$.}
Then for all $z\in[0,1]$, using~\eqref{eqn_utility_players1},
$$h(z):=u(1,z)-u(2,z)=2\m z(1-z)-\Delta_P \le \nicefrac{\m}{2}-\Delta_P \le 0.$$
Hence, for all $z\neq \nicefrac{1}{2}$, we have $h(z)<0$, implying $w=0$.
\begin{itemize}
\item if $z<\nicefrac{1}{2}$, then \eqref{eqnn_z_val} gives $z=\alpha_R w=0$.
\item if $z > \nicefrac12$, then \eqref{eqnn_z_val} gives $z=\alpha_H+\alpha_R w=\alpha_H$, which is valid iff $\alpha_H > \nicefrac{1}{2}$.
\item if $z = \nicefrac{1}{2}$ and $\Delta_P > \nicefrac{\m}{2}$, then again $h(z) < 0$ implies $w=0$. Hence by \eqref{eqnn_z_val}, we obtain $\nicefrac{1}{2} = \alpha_H$.
\item if $z = \nicefrac{1}{2}$ and $\Delta_P = \nicefrac{\m}{2}$, then $h(z) = 0$, implies any $w \in [0,1]$ is feasible. By \eqref{eqnn_z_val}, we have $\nicefrac{1}{2} = \alpha_H + \alpha_R w$ which provides a solution $w \in [0,1]$ iff $\alpha_H \leq \nicefrac{1}{2} \leq~\alpha_H+\alpha_R$.
\end{itemize}
Thus $z=0$ is always an equilibrium, and if $\alpha_H \geq \nicefrac{1}{2}$, then $z=\alpha_H$ is an equilibrium. If $\Delta_P = \nicefrac{\m}{2}$ then $z = \nicefrac{1}{2}$ is an equilibrium whenever $\alpha_H \leq \nicefrac{1}{2} \leq \alpha_H+\alpha_R$. 
 
We now check stability using Definition~\ref{defn_filippov}. From \eqref{eqn_M_I_fun}      there exists an $\epsilon>0$ such that $M^I(z)=-z<0$ for all $z\in N_\epsilon^+(0)$; hence $z=0$ is an attractor.
If $\alpha_H>\nicefrac12$, then there exists an $\epsilon>0$ such that $M^I(z)=\alpha_H-z>0$ for all $z\in N_\epsilon^-(\alpha_H)$ and $<0$ for all $z\in N_\epsilon^+(\alpha_H)$; hence $z=\alpha_H$ is an attractor.

Finally, when $\alpha_H=\nicefrac{1}{2}$, there exists $\epsilon>0$ such that $M^I(z)=-z<0$ for all $z\in N_\epsilon^-(\nicefrac{1}{2})$; hence $z=\nicefrac{1}{2}$ does not satisfy Definition~\ref{defn_filippov} and is not an attractor.

Thus the stable equilibria are $z^*_S=0$ always and $z^*_S=\alpha_H$ whenever $\alpha_H>\nicefrac{1}{2}$.

\noindent \textbf{Case (ii): when $\Delta_P<\nicefrac{\m}{2}$.} Then
$$
h(z)>0 \ \text{iff}\ z\in(R^-,R^+),
h(z)<0 \ \text{iff}\ z\in[0,R^-)\cup(R^+,1].
$$
\noindent If $z<\nicefrac12$ then from \eqref{eqnn_z_val}, we have $z=\alpha_R w$.
\begin{itemize}
\item if $z\in[0,R^-)$ then $h(z)<0$, so $w=0$ and hence $z=0$.
\item if $z\in(R^-,\nicefrac{1}{2})$ then $h(z)>0$, so $w=1$ and hence $z=\alpha_R$, which is consistent iff $R^-<\alpha_R<\nicefrac12$.
\item if $z=R^-$ then $h(z)=0$ and any $w\in[0,1]$ is possible; the equation $R^-=\alpha_R w$ is solvable with $w\in[0,1]$ iff $\alpha_R\ge R^-$.
\end{itemize}
Verifying as before using \eqref{eqn_M_I_fun}, $R^-$ is not stable, whereas the other equilibria (whenever they exist) are attractors.



\vspace{1mm}
\noindent if $z\ge \nicefrac12$ then from \eqref{eqnn_z_val}, we have $z=\alpha_R w + \alpha_H$.
\begin{itemize}
\item if $z\in[\nicefrac12,R^+)$ then $h(z)>0$, so $w=1$ and hence
$
z=\alpha_R+\alpha_H,
$ so this $z$ is an MFE  iff $\nicefrac{1}{2} \le \alpha_R+\alpha_H<R^+$.

\item if $z\in(R^+,1]$ then $h(z)<0$, so $w=0$ and hence $z=\alpha_H$.
This is consistent iff $\alpha_H>R^+$.

\item if $z=R^+$ then $h(z)=0$ and any $w\in[0,1]$ is possible; the equation
$
z=R^+=\alpha_H+\alpha_R w,
$
is solvable with $w\in[0,1]$ iff
$\alpha_H\le R^+\le \alpha_H+\alpha_R.$
\end{itemize}
Verifying as before using \eqref{eqn_M_I_fun}, among the equilibria   with  $z\ge \nicefrac12$,    $z=\alpha_R+\alpha_H$, $z=\alpha_H$, and $z=R^+$ (when they exist) are attractors, whereas $z=\nicefrac{1}{2}$ is not. \eop

\hide{
\subsection{Moral agents: always-clean technology}
\label{subsec_strongly_moral}
We first consider the case in which a fraction $\alpha_m$ of the population consists of strongly moral agents who always adopt the clean technology, regardless of prices or aggregate adoption levels. The remaining population is composed of rational and herding agents with proportions $\alpha_r$ and $\alpha_h$, respectively, satisfying $\alpha_r+\alpha_h+\alpha_m=1$.

Strongly moral agents provide a guaranteed lower bound on clean adoption and may shift the system into regions where rational agents favor clean technology. We extend
the notion of $\alpha$-Rational Nash Equilibrium to this three-type setting and characterize the resulting equilibrium adoption levels.

The following proposition describes the equilibrium adoption set in the presence of strongly moral agents.
\begin{prop}\label{prop:eq_three_types_cases} The equilibrium adoption set $\mathcal N_{\alpha_r}$ is given as:

\noindent when $\Delta_P>\nicefrac{\m}{2}$
$$
\mathcal N_{\alpha_r}=
\left\{
\begin{array}{ll}
\alpha_m, & \text{if } \alpha_m<\nicefrac{1}{2},\\
\alpha_m+\alpha_h, & \text{if } \alpha_r\le \nicefrac{1}{2}.
\end{array}
\right.
$$
\noindent when $\Delta_P=\nicefrac{\m}{2}$
$$
\mathcal N_{\alpha_r}=
\left\{
\begin{array}{ll}
\alpha_m, & \text{if } \alpha_m<\nicefrac{1}{2},\\
\alpha_m+\alpha_h, & \text{if } \alpha_r<\nicefrac{1}{2},\\
\nicefrac{1}{2}, & \text{if } \alpha_r\ge \nicefrac{1}{2}.
\end{array}
\right.
$$
\noindent when $\Delta_P<\nicefrac{\m}{2}$
$$
\mathcal N_{\alpha_r}=
\left\{
\begin{array}{ll}
\alpha_m, & \text{if } \alpha_m<\nicefrac{1}{2}\ \text{and}\ \alpha_m\le R^-,\\
\alpha_m+\alpha_r, & \text{if }  R^-<\alpha_m+\alpha_r < \nicefrac{1}{2},\\
R^-, & \text{if } \alpha_m\le R^-\le \alpha_m+\alpha_r,\\
\alpha_m+\alpha_h, & \text{if } \alpha_r\le R^-,\\
R^+, & \text{if } \alpha_r\ge R^-.
\end{array}
\right.
$$
Moreover, there is no equilibrium with 
$z\in\big[\nicefrac{1}{2},R^+\big).$
\end{prop}
\textbf{Proof:} The proof is provided in Appendix \ref{appendix}. \eop

We next analyze the long-run behavior under turn-by-turn dynamics and identify the stable equilibria that arise almost surely.
\begin{prop}\label{prop:eq_three_types_cases} The stable equilibrium adoption set $\mathcal N^S_{\alpha_r}$ is given as:

\noindent when $\Delta_P \geq \nicefrac{\m}{2}$
$$
\mathcal N^S_{\alpha_r}=
\left\{
\begin{array}{ll}
\alpha_m, & \text{if } \alpha_m<\nicefrac{1}{2},\\
\alpha_m+\alpha_h, & \text{if } \alpha_r < \nicefrac{1}{2}.
\end{array}
\right.
$$

\noindent when $\Delta_P<\nicefrac{\m}{2}$
$$
\mathcal N^S_{\alpha_r}=
\left\{
\begin{array}{ll}
\alpha_m, & \text{if }  \alpha_m < R^-,\\
\alpha_m+\alpha_r, & \text{if }\ R^-<\alpha_m+\alpha_r < \nicefrac{1}{2},\\
\alpha_m+\alpha_h, & \text{if } \alpha_r < R^-,\\
R^+, & \text{if } \alpha_r > R^-.
\end{array}
\right.
$$
Moreover, there is no equilibrium with 
$z\in\big[\nicefrac{1}{2},R^+\big).$
\end{prop}
These results show that strongly moral agents can promote clean adoption by anchoring a positive baseline level of adoption, which may activate rational incentives and reinforce herding behavior toward clean technology.

Since strongly moral agents always choose the clean action, implies strongly moral agents provide a guaranteed lower bound on clean adoption.

\smallskip
\noindent\textbf{When $\Delta_P\ge \nicefrac{\m}{2}$,}
rational players strictly prefer the unclean technology (except for indifference at $z=\nicefrac{1}{2}$ when $\Delta_P=\nicefrac{\m}{2}$), so moral and herding types are only supporting clean adoption. Therefore, herding cannot create a moderate level of adoption and only affects outcomes by coordinating the population toward either low adoption or high adoption.

\smallskip
\noindent\textbf{When $\Delta_P<\nicefrac{\m}{2}$,} rationals support clean adoption only when the aggregate adoption lies in the interval $(R^-,R^+)$. 
\begin{itemize}
    \item the moral-only equilibrium $z=\alpha_m$ persists whenever $\alpha_m\le R^-$, means that moral players alone may be insufficient to move the system into the region where rational agents favor clean adoption; and
    \item   if $\alpha_m+\alpha_h \ge R^+$, herding can sustain a high-adoption outcome $z=\alpha_m+\alpha_h$, even though rational players prefer the unclean technology at that level. 
    \item if $\alpha_m+\alpha_h \le R^+$, the system instead settles at the threshold $z=R^+$, where rational players are indifferent and only partially adopt the clean technology.
    
    \item if the combined share of moral and rational players, $\alpha_m+\alpha_r$, lies between $R^-$ and $R^+$, another equilibrium $z=\alpha_m+\alpha_r$ appears, where rational players adopt clean but herding follows the unclean majority. Otherwise, the system settles at the lower threshold $z=R^-$ with partial adoption by rational players.
\end{itemize}
Finally, since no equilibrium exists in $[\nicefrac{1}{2},R^+)$, adoption levels just above one half cannot be sustained. As soon as clean adoption reaches 50\%, herding switches to clean. At the same time, in this region rational players also prefer clean. So suddenly everyone follows clean, and adoption jumps to a very high level.
\section{Appendix} \label{appendix}
\noindent \textbf{Proof of Proposition \ref{prop:eq_three_types_cases}: }\label{prof_prop:eq_three_types_cases}
Since $z(1-z)\le \nicefrac{1}{4}$ on $[0,1]$, implies
$2z(1-z)\m \le \nicefrac{\m}{2}.$

\smallskip
\noindent Best response of rational players:
\begin{itemize}
    \item If $h(z)>0$, rational players choose action $1$, hence $w=1$.
    \item If $h(z)<0$, rational players choose action $2$, hence $w=0$.
    \item If $h(z)=0$, then any $w\in[0,1]$.
\end{itemize}
\smallskip
\noindent\textbf{Case analysis:}

\noindent\emph{Case (i): If $\Delta_P>\nicefrac{\m}{2}$}
then for all $z$,
$h(z)\le -\Delta_P + \nicefrac{\m}{2}<0,$
so $w=0$.
\begin{itemize}
    \item If $z<\nicefrac{1}{2}$, we get $z=\alpha_m$ and must have $\alpha_m<\nicefrac{1}{2}$.
    \item If $z\ge \nicefrac{1}{2}$, we get $z=\alpha_m + \alpha_h$ and must have $\alpha_m+\alpha_h\ge \nicefrac{1}{2}$.
\end{itemize}
\medskip
\noindent\emph{Case (ii): If $\Delta_P=\nicefrac{\m}{2}$} then for all $z$,
$$
h(z)=-\nicefrac{\m}{2}+2z(1-z)\m=\m ( 2z(1-z)-\nicefrac{1}{2})\leq 0,
$$
and equality holds only at $z=\nicefrac{1}{2}$, since $z(1-z)$ attains its unique maximum $\nicefrac{1}{4}$ at $z=\nicefrac{1}{2}$.
Hence for $z \neq \nicefrac{1}{2}$ we have $h(z)<0$ and thus $w=0$.
The same reasoning as in Case (i) yields equilibria $z=\alpha_m$ (when $\alpha_m < \nicefrac{1}{2}$) and $z=\alpha_m+\alpha_h$ (when $\alpha_m+\alpha_h>\nicefrac{1}{2}$). 

At $z=\nicefrac{1}{2}$, we have $h(\nicefrac{1}{2})=0$, so any $w\in[0,1]$ can be possible; and
herding players choose action $1$. Thus
$$
\nicefrac{1}{2}=\alpha_m+\alpha_h+\alpha_r w,
$$
which has a solution $w\in[0,1]$ iff $\alpha_m+\alpha_h\le \nicefrac{1}{2}$. In that case,
$$
\alpha_r w=\nicefrac{1}{2}-(\alpha_m+\alpha_h) \in [0,1].
$$

\medskip
\noindent\emph{Case (iii): $\Delta_P<\nicefrac{\m}{2}$.} The equation $h(z)=0$ has two roots are $R^-$ and $R^+$ as stated. Since $h$ is a concave quadratic in $z$,
it is positive on $(R^-,R^+)$ and negative on $[0,R^-)\cup(R^+,1]$.

\smallskip
\noindent\emph{Low-adoption region $z<\nicefrac{1}{2}$.} Here $z=\alpha_m+\alpha_r w$.
\begin{itemize}
    \item If $z\in[0,R^-)$ then $h(z)<0$ so $w=0$ and hence $z=\alpha_m$, requires $\alpha_m\le R^-$ and $\alpha_m < \nicefrac{1}{2}$.
\item If $z\in(R^-,\nicefrac{1}{2})$ then $h(z)>0$ so $w=1$ and hence $z=\alpha_m+\alpha_r$, requires
$R^-<\alpha_m+\alpha_r<R^+$ and $\alpha_m+\alpha_r<\nicefrac{1}{2}$.
\item If $z=R^-$ then $h(z)=0$, so any $w\in[0,1]$ is possible; the equation $R^-=\alpha_m+\alpha_r w$ is solvable with $w\in[0,1]$ iff $\alpha_m\le R^-\le \alpha_m+\alpha_r$, gives $\alpha_r w=R^- - \alpha_m$.
\end{itemize}

\smallskip
\noindent\emph{High-adoption region $z \ge \nicefrac{1}{2}$.} Here $z=\alpha_m+\alpha_h+\alpha_r w$.
\begin{itemize}
    \item If $z\in[\nicefrac{1}{2},R^+)$ then $h(z)>0$, so $w=1$, and the fixed-point equation yields
$z=\alpha_m+\alpha_h+\alpha_r=1$, contradicting $z<R^+<1$. Hence no equilibrium exists in $[\nicefrac{1}{2},R^+)$.
\item If $z\in(R^+,1]$ then $h(z)<0$, so $w=0$ and hence $z=\alpha_m+\alpha_h$, requires $\alpha_m+\alpha_h\ge R^+$.
\item If $z=R^+$ then $h(z)=0$ and any $w\in[0,1]$ is possible; the equation $R^+=\alpha_m+\alpha_h+\alpha_r w$
is solvable with $w \in [0,1]$ iff $\alpha_m+\alpha_h\le R^+\le 1$, giving
$\alpha_r w = R^+-(\alpha_m+\alpha_h)$. \eop
\end{itemize} 
}

\hide{
\subsection{Comparison and implications}
\label{subsec_extreme_comparison}

We now study long-run evolution of atmospheric $\coo$ concentration induced by the equilibrium adoption behavior.

We first characterize the set of MT-AMFEs in this setting, denoted by
$\mathcal{N}_{SA}$.
\begin{prop}\label{prop:eq_three_typ_dem1}
Consider the MgCT game with SA agents then ${\bm z}^* \in \mathcal{N}_{SA}$ as below.

\noindent when  $\Delta_P>\nicefrac{\m}{2}$
$$
z^* =
\left\{
\begin{array}{ll}
0, & \text{always},\\
\alpha_H, & \text{if }  \alpha_r+\alpha_{SA} \leq \nicefrac{1}{2}.
\end{array}
\right.
$$
\noindent when $\Delta_P=\nicefrac{\m}{2}$
$$
z^* =
\left\{
\begin{array}{ll}
0, & \text{always},\\
\alpha_H, & \text{if } \alpha_R+\alpha_{SA} < \nicefrac{1}{2},\\
\nicefrac{1}{2}, & \text{if } \nicefrac{1}{2} \leq \alpha_R+\alpha_{SA}\ \text{and}\ \alpha_{SA}\le \nicefrac{1}{2}.
\end{array}
\right.
$$
\noindent  when $\Delta_P<\nicefrac{\m}{2}$
$$
z^*=
\left\{
\begin{array}{ll}
0, & \text{always},\\
\alpha_R, & \text{if } R^-<\alpha_R<\nicefrac{1}{2},\\
R^-, & \text{if } \alpha_R\ge R^-,\\
\alpha_R+\alpha_H, & \text{if } R^- < \alpha_{SA} \leq \nicefrac{1}{2},\\
\alpha_H, & \text{if } \alpha_{SA}+\alpha_R < R^-,\\
R^+, & \text{if } \alpha_{SA} \leq R^- \leq \alpha_{SA}+\alpha_R.
\end{array}
\right.
$$
In addition, $z^*=\nicefrac{1}{2}$ is an equilibrium iff $\alpha_{SA}=\nicefrac{1}{2}$.
\end{prop}
\textbf{Proof:} The proof is provided in Appendix \ref{appendix}. \eop

Having characterized the equilibrium set, we next investigate which of these equilibria are stable under turn-by-turn dynamics.

Recently, the multi-type mean-field framework and the associated turn-by-turn dynamics have been analyzed in \cite{vyas2026multitype}. In particular,  \cite[Theorem~2]{vyas2026multitype} establishes almost sure convergence to singleton internally chain transitive (ICT) sets (see \cite[Definition 4]{vyas2026multitype}), and \cite[Theorem~5]{vyas2026multitype} shows that such singleton limit ICT sets  correspond to equilibrium points of the multi-type mean-field game. However, the stability or attractor properties of these  equilibria are not explicitly examined therein.

Here, we use these results of \cite{vyas2026multitype} and further analyze the stability of the resulting equilibria, using similar technique as in the rational–herding setting. Moreover, since the construction of turn-by-turn dynamics for multiple types follows the same principles as in rational and herding case, we do not repeat the full technical development here and instead focus on the stability of the equilibria and their implications.

\begin{prop}\label{prop:eq_three_typ_dem} Consider the MgCT game with SA agents then the stable equilibrium adoption $z^{*}_S$ is given as:

\medskip
\noindent when $\Delta_P \geq \nicefrac{\m}{2}$
$$
z^{*}_S =
\left\{
\begin{array}{ll}
0, & \text{always},\\
\alpha_H, & \text{if } \alpha_R+\alpha_{SA} < \nicefrac{1}{2}.
\end{array}
\right.
$$

\noindent when $\Delta_P<\nicefrac{\m}{2}$
$$
z^{*}_S=
\left\{
\begin{array}{ll}
0, & \text{always},\\
\alpha_R, & \text{if } R^-<\alpha_R<\nicefrac{1}{2},\\
\alpha_R+\alpha_H, & \text{if } R^- < \alpha_{SA} < \nicefrac{1}{2},\\
\alpha_H, & \text{if } \alpha_{SA}+\alpha_R < R^-,\\
R^+, &\text{if } \alpha_{um}\le R^-<\alpha_{SA}+\alpha_R.
\end{array}
\right.
$$ 
\end{prop}}

\end{document}